\newtheorem{thm}{Theorem}
\newtheorem{clm}{Claim}
\newtheorem{lem}{Lemma}
\newtheorem{obs}{Observation}
\newtheorem{defi}{Definition}
\author{Manu Basavaraju}
\author{L. Sunil Chandran}
\author{Deepak Rajendraprasad}
\author{Arunselvan Ramaswamy}
\affil
{
	Department of Computer Science and Automation, \authorcr
	Indian Institute of Science, \authorcr
	Bangalore -560012, India. \authorcr
	\{manu, sunil, deepakr, arunselvan\}@csa.iisc.ernet.in
}
\begin{document}
\pagestyle{plain}
\title{Rainbow Connection Number of Graph Power and Graph Products}
\maketitle
\begin{abstract}
\textit{Rainbow connection number}, $rc(G)$, of a connected graph G is the minimum number of colors needed to color its edges so that every pair of vertices is connected by at least one path in which no two edges are colored the same (Note that the coloring need not be proper). In this paper we study the rainbow connection number with respect to three important \emph{graph product} operations (namely {\em cartesian product}, {\em lexicographic product} and {\em strong product}) and the operation of taking the \emph{power of a graph}. In this direction, we show that if $G$ is a graph obtained by applying any of the operations mentioned above on non-trivial graphs, then $rc(G) \le 2r(G)+c$, where $r(G)$ denotes the radius of $G$ and $c \in \{0,1,2\}$. In general the rainbow connection number of a bridgeless graph can be as high as the square of its radius \cite{basavaraju2010rainbow}. This is an attempt to identify some graph classes which have rainbow connection number very close to the obvious lower bound of $diameter$ (and thus the $radius$). The bounds reported are tight upto additive constants. The proofs are constructive and hence yield polynomial time $(2+\frac{2}{r(G)})$-factor approximation algorithms.
\\
\textbf{Keywords:} Graph Products, Graph Power, Rainbow Coloring.
\end{abstract}
\section{Introduction}

{\em Edge colouring} of a graph is a function from its edge set to the set of natural numbers. A path in an edge coloured graph with no two edges sharing the same colour is called a {\em rainbow path}. An edge coloured graph is said to be {\em rainbow connected} if every pair of vertices is connected by at least one rainbow path. Such a colouring is called a {\em rainbow colouring} of the graph. The minimum number of colours required to rainbow colour a connected graph is called its {\em rainbow connection number}, denoted by $rc(G)$. For example, the rainbow connection number of a complete graph is $1$, that of a path is its length, and that of a star is its number of leaves. For a basic introduction to the topic, see Chapter $11$ in \cite{chartrand2008chromatic}.

The concept of rainbow colouring was introduced in \cite{chartrand2008rainbow}. It was shown in \cite{chakraborty2009hardness} that computing the rainbow connection number of a graph is NP-Hard. To rainbow colour a graph, it is enough to ensure that every edge of some spanning tree in the graph gets a distinct colour. Hence order of the graph minus one is an upper bound for rainbow connection number. Many authors view rainbow connectivity as one `quantifiable' way of strengthening the connectivity property of a graph \cite{caro2008rainbow, chakraborty2009hardness, krivelevich2010rainbow}. Hence tighter upper bounds on rainbow connection number for graphs with higher connectivity have been a subject of investigation. The following are the results in this direction reported in literature: Let $G$ be a graph of order $n$. If $G$ is 2-edge-connected (bridgeless), then $rc(G) \leq 4n/5 -1$ and if $G$ is 2-vertex-connected, then $rc(G) \leq \min\{2n/3, n/2 + O(\sqrt{n})\}$ \cite{caro2008rainbow}. This was very recently improved in \cite{chandran2011raincon}, where it was shown that if $G$ is $2$-vertex-connected, then $rc(G) \leq \lceil n/2 \rceil$, which is the best possible upper bound for the case. It also improved the previous best known upper bound for $3$-vertex connected graphs of $3(n+1)/5$ \cite{li2010rain3con}. It was shown in \cite{krivelevich2010rainbow} that $rc(G) \leq 20n/\delta$ where $\delta$ is the minimum degree of $G$. The result was improved in \cite{chandran2010raindom} where it was shown that $rc(G) \leq 3n/(\delta + 1) + 3$. Hence it follows that $rc(G) \leq 3n/(\lambda + 1) + 3$ if $G$ is $\lambda$-edge-connected and $rc(G) \leq 3n/(\kappa + 1) + 3$ if $G$ is $\kappa$-vertex-connected. It was shown in \cite{chandran2011raincon} that the above bound in terms of edge connectivity is tight up to additive constants and that the bound in terms of vertex connectivity can be improved to $(2 + \epsilon)n/\kappa + 23/ \epsilon^2$, for any $\epsilon > 0$.

Many, but not all, of the above bounds are increasing functions of $n$. Since diameter, and hence radius, are lower bounds for rainbow connection number, any upper bound which is a function of one of the lower bounds alone is of great interest. Apart from the structural insights that it gives to the problem, it can also have applications in the design and analysis of approximation algorithms for rainbow colouring, which is known to be an NP-hard problem \cite{chakraborty2009hardness}. For a general graph, the rainbow connection number cannot be upper bounded by a function of radius or diameter alone. For instance, the star $K_{1,n}$ has a radius $1$ but rainbow connection number $n$. Still, the question of whether such an upper bound exists for special graph classes remain.

A very general result in this direction is the one by Basavaraju et al. \cite{basavaraju2010rainbow} which says that for every bridgeless graph of radius $r$, the rainbow connection number is upper bounded by $r(r+2)$. They also demonstrate that the above bound, which is quadratic in the radius, is tight not just for bridgeless graphs but also for graphs of any higher connectivity. This result was extended to graphs with bridges in \cite{dong2011rainbow}. This throws open a few interesting questions. Which classes of graphs have upper bounds on rainbow connection number which is (1) constant factor of radius, (2) additive factor above radius, etc. It is evident that answers to these questions will help in the design and analysis of constant factor and additive factor approximation algorithms for the problem. Moreover, they can give hints to answering the still open question of characterising graphs for which the rainbow connection number is equal to the diameter. Such additive factor upper bounds were demonstrated for unit interval, interval, AT-free, circular arc, threshold and chain graphs in \cite{chandran2010raindom}. Basavaraju et. al \cite{basavaraju2010rainbow} also showed that rainbow connection number will have a constant factor upper bound on bridgeless graphs in which the size  of a maximum induced cycle (chordality) is bounded independently of radius.

In this paper, we demonstrate a large class of graphs for which the rainbow connection number is upper bounded by a linear function of its radius. We study the rainbow connection number with respect to three important \emph{graph product} operations (namely {\em cartesian product}, {\em lexicographic product} and {\em strong product}) and the operation of taking the \emph{power of a graph}. Specifically, we show that if $G$ is a graph obtained by applying any of the operations mentioned above on non-trivial graphs, then $rc(G) \le 2r(G)+c$, where $r(G)$ denotes the radius of $G$ and $c \in \{0,1,2\}$. The bounds reported are either tight or tight upto additive constants. See Section \ref{Results} for the exact statements. The proofs are constructive and hence yield polynomial time $(2+\frac{2}{r(G)})$-factor approximation algorithms.

~~~~~~

The rainbow connection number of some graph products has got recent attention \cite{li2011characterize,gologranc2011rainbow,klavzar2011rainbow}. One way to bound the rainbow connection number of a graph product is in terms of the rainbow connection number of the operand graphs. Such an approach was adopted by Li et al. \cite{li2011characterize} to study rainbow connection number with respect to Cartesian product and the strong product. In particular, they show that the rainbow connection number of the Cartesian product and hence the strong product of two connected graphs are upper bounded by the sum of the rainbow connection numbers of the operand graphs. Later, it was shown in \cite{gologranc2011rainbow} that the rainbow connection number of the strong product of two connected graphs is upper bounded by the larger of the rainbow connection numbers of the operand graphs. Most of the bounds mentioned above can be far from being tight when the rainbow connection number of the operand graphs is much higher than their radii. The importance of the bounds reported here is that they are independent of the rainbow connection number of the operand graphs and depends only on the radius of the resultant graph.

~~~~~

\subsection{Preliminaries}

The graphs considered in this paper are finite, simple and undirected. Given a graph $G$, $|G|$ denotes the number of vertices in the graph, also called the \textit{order} of $G$.
A \textit{trivial} graph is a graph of order 0 or 1.

Given a graph $G$, a \textit{walk} in $G$, from vertex $u$ to vertex $v$ is defined as a sequence of vertices (not necessarily distinct), starting at $u$ and ending at $v$, say
$(u=u_{0}),\ u_{1},\ \dots,\ (u_{k}=v)$ such that $(u_{i}, u_{i+1}) \in E(G)$ for $0 \le i \le k-1$. A \textit{walk} in which all the vertices are distinct is called a \textit{path}. The length of a path is the number of edges in that path. A single vertex is considered to be a path of length $0$. The \textit{distance} between two vertices $u$ and $v$ in $G$ is the length of a shortest path between them and is denoted by $dist_{G}(u, v)$. Given two walks $W_{1} = u_{0}, u_{2}, \dots, u_{k}$ and $W_{2} = v_{0}, v_{1}, \dots, v_{l}$ such that $u_{k} = v_{0}$, we can \textit{concatenate} $W_{1}$ and $W_{2}$ to get a longer walk, $W = W_{1} \ldotp W_{2} = u_{0}, u_{1}, \dots, (u_{k}=v_{0}), v_{1}, v_{2}, \dots, v_{l}$.

Given a graph $G$, the \textit{eccentricity} of a vertex, $v \in \ V(G)$ is given by $ecc(v) = \max\{ dist_{G}(v, u) : u \in V(G)\}$. The \textit{radius} of $G$ is given by $r(G) = \min\{ecc(v): v \in V(G)\}$ and the \textit{diameter} of $G$ is defined as $diam(G) = \max\{ecc(v): v \in V(G)\}$. A \textit{central vertex} of $G$ is a vertex with eccentricity equal to the radius of $G$.

\begin{defi}[The Cartesian Product]
Given two graphs $G$ and $H$, the Cartesian product of $G$ and $H$, denoted by $G \Box H$, is defined as follows: $V(G \Box H) = V(G) \times V(H)$. Two distinct vertices $[g_{1}, h_{1}]$ and $[g_{2}, h_{2}]$ of $G \Box H$ are adjacent if and only if either $g_{1} = g_{2}$ and $(h_{1}, h_{2}) \in E(H)$ or $h_{1} = h_{2}$ and $(g_{1}, g_{2}) \in E(G)$.
\end{defi}

\begin{defi}[The Lexicographic Product]
Given two graphs $G$ and $H$, the lexicographic product of $G$ and $H$, denoted by
$G \circ H$, is defined as follows: $V(G \circ H) = V(G) \times V(H)$. Two distinct vertices $[g_{1}, h_{1}]$ and $[g_{2}, h_{2}]$ of $G \circ H$ are adjacent if and only if either $(g_{1}, g_{2}) \in E(G)$ or $g_{1} = g_{2}$ and $(h_{1}, h_{2}) \in E(H)$.

\end{defi}

\begin{defi}[The Strong Product]
Given two graphs $G$ and $H$, the strong product of $G$ and $H$, denoted by $G \boxtimes H$, is defined as follows: $V(G \boxtimes H) = V(G) \times V(H)$. Two distinct vertices
$[g_{1}, h_{1}]$ and $[g_{2}, h_{2}]$ of $G \boxtimes H$ are adjacent if and only if one of the three conditions hold:

\begin{enumerate}
\item $g_{1} = g_{2}$ and $(h_{1}, h_{2}) \in E(H)$ or
\item $h_{1} = h_{2}$ and $(g_{1}, g_{2}) \in E(G)$ or
\item $(g_{1}, g_{2}) \in E(G)$ and $(h_{1}, h_{2}) \in E(H)$.
\end{enumerate}
\end{defi}

It is easy to see from the definitions of the products above that if $G = K_{1}$ (respectively $H=K_1$) then the resultant graph is isomorphic to $H$ (respectively $G$). The above graph products are extensively studied in graph theory. See \cite{imrich2000product} for a comprehensive treatment of the topic.

\begin{defi}[Power of a graph]
The {\em k-th Power } of a graph, denoted by $G^{k}$ where $k \ge 1$, is defined as follows: $V(G^{k}) = V(G)$. Two vertices $u$ and $v$ are adjacent in $V(G^{k})$ if and only if the distance between vertices $u$ and $v$ in $G$, i.e., $dist_{G}(u,v) \le k$.
\end{defi}

Given a graph $G$, another graph $G'$ is called a \textit{spanning subgraph} of $G$ if $G'$ is a subgraph of $G$ and $V(G') = V(G)$. A vertex $v$ is called \textit{universal} if it is adjacent to all the other vertices in the graph.

Given a tree $T$, the unique path between any two vertices, $u$ and $v$ in $T$ is denoted by $P_{T}(u, v)$. It is sometimes convenient to consider some vertex from the tree as special; such a vertex is then called the \textit{root} of this tree. A tree with a fixed root is called a \textit{rooted tree}.

Let $T$ be a \textit{rooted tree} with root $root(T) = v_{0}$. The \textit{level number} of any vertex $v \in T$ is given by $\ell_{T}(v) = dist_{T}(v,v_{0})$. If the tree in context is clear then we simply use $\ell(v)$. The \textit{depth} of $T$ is given by $d(T) = \max\{\ell(v): v \in V(T)\}$. Given two vertices $u, v \in V(T)$, $u$ is called an \textit{ancestor} of $v$ if $u \in P_{T}(v, v_{0})$. It is easy to see that $\ell(v) \ge \ell(u)$. If $u$ is an ancestor of $v$ and $\ell(v) = \ell(u) + 1$ then $u$ is called the \textit{parent} of $v$ and is denoted by $parent(v)$.

\begin{defi}[Layer-wise Coloring of a Rooted Tree]
 \label{ColorFunc}

Given a rooted tree $T$ and an ordered multi-set of colors $C = \{ c_{i}: 1 \le i \le n\}$ where $n \ge d(T)$, we define the edge coloring, $f_{T,C}: E(T)\rightarrow C$ as
$f_{T,C}((u,v)) = c_{i}$ where $i =\max\{ \ell(u), \ell(v)\}$. We refer to $f_{T,C}$ as the \textit{Layer-wise Coloring} of $T$ that uses colors from the set $C$.
\end{defi}
Given an edge coloring $f$ of a graph $G$ using colors from the set $C$, let $C' \subseteq C$. Consider a path in $G$ that is rainbow colored with respect to $f$. We call this path a $C'$\textit{-Rainbow-Path} if every edge of the path is colored only from the set $C'$.
\begin{obs}
 \label{RootPath}
Let $T$ be a rooted tree and $C = \{c_{1}, c_{2}, \dots, c_{n}\}$ be an ordered set of colors such $c_{i} \ne c_{j}$ for $i \ne j$ and $n \ge d(T)$. Let $f_{T,C}$ be the Layer-wise Coloring of $T$ using colors from $C$. If $u, v \in V(T)$ such that $u$ is an ancestor of $v$ in $T$, then $P_{T}(v,u)$ is a \textit{C-Rainbow-Path} with respect to the coloring $f_{T,C}$. In particular $P_{T}(v,u)$ is a $\{c_{\ell(u)+1}, c_{\ell(u)+2}, \dots, c_{\ell(v)}\}$\textit{-Rainbow-Path} with respect to $f_{T,C}$.
\end{obs}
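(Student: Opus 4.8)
The plan is to exploit the fact that the unique $v$-to-$u$ path in a tree, when $u$ is an ancestor of $v$, is a straight climb towards the root: each successive vertex is the parent of the previous one, so the level numbers strictly decrease by exactly one at each step. Once this is established, the color of every edge can be read off directly from the layer-wise coloring rule, and the conclusion becomes a matter of bookkeeping the indices.

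First I would fix notation for the path. Write $P_{T}(v,u) = (w_{0}, w_{1}, \dots, w_{m})$ with $w_{0} = v$ and $w_{m} = u$. Since $u$ is an ancestor of $v$, it lies on the root path $P_{T}(v, v_{0})$, and because tree paths are unique, every $w_{i+1}$ must be $parent(w_{i})$; hence $\ell(w_{i+1}) = \ell(w_{i}) - 1$. An immediate induction then gives $\ell(w_{i}) = \ell(v) - i$, and from $\ell(w_{m}) = \ell(u)$ we read off $m = \ell(v) - \ell(u)$. Next I would compute the color of the edge $(w_{i}, w_{i+1})$: by Definition \ref{ColorFunc} it is $c_{j}$ with $j = \max\{\ell(w_{i}), \ell(w_{i+1})\} = \ell(w_{i}) = \ell(v) - i$, the maximum always being attained at the deeper endpoint $w_{i}$. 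As $i$ ranges over $0, \dots, m-1$, the index $j$ runs through the consecutive integers $\ell(v), \ell(v)-1, \dots, \ell(u)+1$. Since the colors $c_{1}, \dots, c_{n}$ are pairwise distinct, these $m$ edges receive $m$ distinct colors, so $P_{T}(v,u)$ is rainbow; moreover the set of colors used is exactly $\{c_{\ell(u)+1}, \dots, c_{\ell(v)}\}$, which establishes both the general claim and the sharpened ``in particular'' statement at once.

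There is essentially no serious obstacle here beyond careful indexing. The only point that genuinely needs justification is that consecutive vertices of $P_{T}(v,u)$ stand in a parent--child relation, so that the maximum in the coloring rule is always achieved at the same (deeper) endpoint and the levels descend by exactly one; this is precisely where the hypothesis that $u$ is an \emph{ancestor} of $v$ is used, via the uniqueness of the tree path. I would also remark that every color index invoked lies in the valid range $[1, n]$, since $\ell(u)+1 \ge 1$ and $\ell(v) \le d(T) \le n$, so all of the colors $c_{\ell(u)+1}, \dots, c_{\ell(v)}$ are defined.
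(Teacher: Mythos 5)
Your proof is correct and is exactly the argument the authors intend; the paper states this as an Observation with no written proof, treating it as immediate. Your careful indexing (each step of $P_T(v,u)$ goes to the parent, so the edge colors are $c_{\ell(v)}, c_{\ell(v)-1}, \dots, c_{\ell(u)+1}$, all distinct and all defined since $\ell(v) \le d(T) \le n$) supplies precisely the omitted details.
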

Recall the definition of the Cartesian Product of two graphs $G$ and $H$, denoted by $G \Box H$. We define a decomposition of $G \Box H$ into edge disjoint subgraphs as follows:

\begin{defi}[(G,H)-Decomposition of $G \Box H$]
 \label{GHDecomposition}

Given graphs $G$ and $H$ with vertex sets $V(G) = \{g_{i}: 0 \le i \le |G|-1\}$ and $V(H) = \{h_{i}: 0 \le i \le |H|-1\}$ respectively. We define a decomposition of $G \Box H$ as follows:
\\
For $0 \le j \le |H|-1$, define induced subgraphs, $G_{j}$, with vertex sets, $V(G_{j}) = \{[g_{i}, h_{j}]: 0 \le i \le |G|-1\}$. Similarly, for $0 \le i \le |G|-1$, define induced subgraphs, $H_{i}$, with vertex sets, $V(H_{i}) = \{[g_{i}, h_{j}]: 0 \le j \le |H|-1\}$. Then we have the following:
\begin{enumerate}
\item For $0 \le j \le |H|-1$, $G_{j}$ is isomorphic to $G$ and for $0 \le i \le |G|-1$, $H_{i}$ is isomorphic to $H$.
\item For $0 \le i < j \le |H|-1$, $V(G_{i}) \cap V(G_{j}) = \emptyset$ and hence $E(G_{i}) \cap E(G_{j}) = \emptyset$.
\item For $0 \le k < l \le |G|-1$, $V(H_{k}) \cap V(H_{l}) = \emptyset$ and hence $E(H_{k}) \cap E(H_{l}) = \emptyset$.
\item For $0 \le j \le |H|-1$ and $0 \le i \le |G|-1$, $V(G_{j}) \cap V(H_{i}) = [g_{i}, h_{j}]$ and $E(G_{j}) \cap E(H_{i}) = \emptyset$.
\end{enumerate}
\noindent
We call $G_{1}, G_{2}, \dots, G_{|H|-1}, H_{1}, H_{2}, \dots, H_{|G|-1}$ as the
\textit{\textbf{(G,H)-Decomposition}} of $G \Box H$.
\end{defi}

~~~~~~

\subsection{Our Results}
\label{Results}
\begin{enumerate}
\item If $G$ is a connected graph then $r(G^{k}) \le rc(G^{k}) \le 2r(G^{k}) + 1$ for all $k \ge 2$. 
The upper bound is tight up to an additive constant of $1$.
Note that $r(G^{k}) = \left\lceil\frac{r(G)}{k}\right\rceil$.
[See Theorem \ref{PowerThm}, Section \ref{SectionPower}]
\item If $G$ and $H$ are two connected, non-trivial graphs then $r(G \Box H) \le rc(G \Box H)$ $\le$ $2r(G \Box H)$.
The bounds are tight. Note that $r(G \Box H) = r(G) + r(H)$.
[See Theorem \ref{cart}, Section \ref{SectionCart}]
\item Given two non-trivial graphs $G$ and $H$ such that $G$ is connected we have the following:
\begin{enumerate}
 \item {
If $r(G \circ H) \ge 2$ then
$r(G \circ H) \le rc(G \circ H) \le 2r(G \circ H)$. This bound is tight.
}
\item {
If $r(G \circ H) = 1$ then
$1 \le rc(G \circ H) \le 3$. This bound is tight.
}
\end{enumerate}
[See Theorem \ref{LexicoTheorem}, Section \ref{SectionLex}]
\item If $G$ and $H$ are two connected, non-trivial graphs then
$r(G \boxtimes H) \le rc(G \boxtimes H) \le 2r(G \boxtimes H) + 2$.
The upper bound is tight up to an additive constant $2$. Note that $r(G \boxtimes H) = max\{r(G), r(H)\}$.
[See Theorem \ref{StrongTheorem}, Section \ref{SectionStrong}]
\end{enumerate}

Most of the bounds available in literature for graph products are in terms of raibow connection number of the operand graphs and hence can be far from being tight when the rainbow connection number of the operand graphs is much higher than their radii. It may happen that $rc(G)$ or $rc(H)$ are very large whereas $rc(G \Box H)$, $rc(G \boxtimes H)$, etc. are very small in comparison. For example let $G = K_{1,n}$ and $H = K_{2}$ then by the result in \cite{li2011characterize}, $rc(G \Box H) \le n + 1$ and by the result in \cite{gologranc2011rainbow}, $rc(G \boxtimes H) \le n$. But our results show that $rc(G \Box H) \le 4$ and $rc(G \boxtimes H) \le 4$. This suggests that the rainbow connection number of product of graphs may be related to the radii of the operand graphs (and hence on the radius of the resultant graph) rather than on their rainbow connection numbers. The results reported here confirm that it is indeed the case. It may be noted that a similar case is true even for graph powers. That is, $rc(G^{k})$ is independent of $rc(G)$ and is upper-bound by a linear function of $r(G^{k})= \lceil \frac{r(G)}{k}\rceil$.

~~~~~~

\section{Rainbow Connection Number of the k-th Power of a Graph H}
\label{SectionPower}
\paragraph{}
For $k \ge$ 1, recall that the \textit{k-th power of a graph H}, denoted by $H^{k}$,
as follows: $V(H^{k})$ = $V(H)$ and any two vertices $u$ and $v$ $\in$ $V(H^{k})$
are adjacent if and only if $dist_{H}(u,\ v)$ $\le$ $k$. It is easy to verify that $r(H^{k}) = \left\lceil \frac{r(H)}{k}\right\rceil$
and $diam(H^{k}) = \left\lceil \frac{diam(H)}{k}\right\rceil$.
\paragraph{}
Since $H^{1} = H$, for the remainder of this section we assume that $k \ge 2$.
Let $T$ be the \textit{BFS-Tree} rooted at some central vertex, say $h_{0}$, of $H$. 
Then clearly the depth of tree T, $d(T)$ $=$ $r(H)$.
Clearly $T^{k}$ is a spanning subgraph of $H^{k}$ and hence $rc(H^{k}) \le rc(T^{k})$.
So in order to derive an upper bound for $rc(H^{k})$ in terms of $r(H^{k})$ it is enough to derive an upper bound for $rc(T^{k})$
in terms of $\left\lceil \frac{d(T)}{k}\right\rceil$ ( $r(H^{k}) = \left\lceil \frac{d(T)}{k}\right\rceil$). 
\paragraph{}
Let $V(T)$ = \{ $h_{i}$: 0 $\le$ $i$ $\le$ $|H|-1$ \}. For $0 \le i \le k - 1$, let
$V_{i}$ = \{ $u \in V(T)$ : $\ell_{T}(u) > 0$ and $\ell_{T}(u) \equiv i\mod k$\}. It is easy to see that
$V$ = $\biguplus^{k-1}_{i=0}V_{i}$ 
$\uplus \{h_{0}\}$.
\\
\indent
For $0 \le i \le k-1$ and $0 \le j \le \left\lceil\frac{d(T)}{k}\right\rceil $ we define $V^{j}_{i} = \{u \in V_{i} \cup \{h_{0}\}
: \left\lceil\frac{\ell_{T}(u)}{k}\right\rceil = j\}$. Note that if $u \in V(T) \setminus \{h_{0}\}$ 
then $u$ belongs to exactly one $V^{j}_{i}$
where $0 \le i \le k-1$ and $1 \le j \le \left\lceil\frac{d(T)}{k}\right\rceil$. For all
$0 \le i \le k-1$, vertex $h_{0}$ is the only vertex in $V^{0}_{i}$.
\\
\indent
Now we define a function, $par$: $V(T) \setminus \{h_{0}\} \rightarrow V(T)$ as follows:  
$\forall u \in V(T) \setminus \{h_{0}\}$, $par(u) = v$
such that if $u \in V^{j}_{i}$ then $v \in V^{j-1}_{i}$ and $(u,v) \in E(T^{k})$. 
Such a vertex $v$ always exists because of the following reasons: 
If $1 \le \ell_{T}(u) \le k$
then $u \in V^{1}_{i}$ for some $0 \le i \le k-1$;  
we may choose $v$ to be $h_{0}$ since $h_{0} \in V^{0}_{i}$ and $(h_{0}, u) \in E(T^{k})$. 
If $\ell_{T}(u) > k$ then we may choose $v$ to be the ancestor of $u$ in $T$
such that $\ell_{T}(v) = \ell_{T}(u)-k$. Then clearly $v \in V^{j-1}_{i}$ and $(u, v) \in E(T^{k})$.
\\
\indent
For $0 \le i \le k-1$, define graph $G_{i}$ with vertex set, $V(G_{i}) = V_{i} \cup \{h_{0}\}$ and edge set,
$E(G_{i}) = \{(u, par(u)): u \in V_{i}\}$. Since every vertex in $G_{i}$ has a path to $h_{0}$, the only vertex in $V^{0}_{i}$,
$G_{i}$
is connected. Moreover using the definition of the function $par$, it is easy to verify that $G_{i}$ does not contain any cycle.
Hence $G_{i}$ is a tree. For $0 \le i \le k-1$ let $root(G_{i}) = h_{0}$. For $i \ne j$
we have $V(G_{i}) \cap V(G_{j}) = \{h_{0}\}$, a singleton set and hence $E(G_{i}) \cap E(G_{j}) = \emptyset$.
\paragraph{}
We define an edge coloring, $f$: $E(T^{k}) \rightarrow A \uplus B \uplus \{c\}$ where 
$A$ = \{ $a_{i}$: $1 \le i \le \lceil d(T)/k\rceil$ \},
$B$ = \{ $b_{i}$: $1 \le i \le \lceil d(T)/k\rceil$ \} and \{$c$\} are ordered sets of colors. 
Since $E(G_{i}) \cap E(G_{j}) =\emptyset$ for $i \ne j$, 
in order to define the edge coloring $f$
it is sufficient to define an edge coloring of $G_{i}$, for $0 \le i \le k-1$ and an edge coloring
 of all the remaining edges of $T^{k}$, separately.
For $0 \le i \le k-1$, if $i \equiv 0 \mod 2$ then
we choose the \textit{Layer-wise Coloring} $f_{G_{i}, A}$ to color the edges of
$G_{i}$ else we choose \textit{Layer-wise Coloring} $f_{G_{i}, B}$ to color the edges of $G_{i}$. All the remaining edges of $T^{k}$ are colored $c$.
\begin{clm}
 \label{PowerClaim}
The edge coloring $f$ is a \textit{rainbow coloring} of $T^{k}$
\end{clm}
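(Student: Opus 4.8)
The plan is to show that every pair of vertices $x,y \in V(T^k)$ is joined by a \emph{rainbow walk}; since any rainbow walk can be shortened to a rainbow path by deleting the sub-walk between two occurrences of a repeated vertex (this only discards edges and hence preserves the all-distinct-colours property), this suffices to prove that $f$ rainbow connects $T^k$. Two ingredients will be used repeatedly. First, by Observation~\ref{RootPath} applied inside the tree $G_i$, the path $P_{G_i}(u,h_0)$ from any $u\in V_i$ up to the root is rainbow and uses colours only from $\{a_1,\dots,a_{j}\}$ when $i$ is even (from $\{b_1,\dots,b_j\}$ when $i$ is odd), where $j=\lceil \ell_T(u)/k\rceil$ is the level of $u$ in $G_i$. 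Second, for a vertex $u$ with $\ell_T(u)=\ell$ and any $1\le t\le k-1$, the ancestor $z$ of $u$ in $T$ at level $\ell-t$ (when it exists) satisfies $dist_T(u,z)=t\le k$, so $(u,z)\in E(T^k)$; moreover $(u,z)$ is not an edge of any $G_i$ (those join a vertex to its ancestor exactly $k$ levels up), hence $f((u,z))=c$.

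If $x=h_0$ or $y=h_0$ the required path is just the single up-path $P_{G_i}(\,\cdot\,,h_0)$, which is rainbow by the first ingredient, so assume $x\neq h_0\neq y$ and write $i_x=\ell_T(x)\bmod k$, $i_y=\ell_T(y)\bmod k$. The easy case is when $i_x$ and $i_y$ have opposite parity: then $P_{G_{i_x}}(x,h_0)$ uses colours from $A$ and $P_{G_{i_y}}(y,h_0)$ uses colours from $B$ (or vice versa). Concatenating them at $h_0$ gives a walk from $x$ to $y$ whose two halves draw colours from the disjoint sets $A$ and $B$ and are each internally rainbow; hence the whole walk is rainbow.

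The main obstacle is the case where $i_x$ and $i_y$ have the same parity, say both even, so both up-paths would use $A$ and could overlap in colours. I would repair this by spending the single colour $c$ once to move $x$ into a class of the opposite (odd) parity. Concretely, among the ancestors of $x$ at levels $\ell_T(x)-1,\dots,\ell_T(x)-(k-1)$ --- each reachable from $x$ by a $c$-coloured edge by the second ingredient --- the residues modulo $k$ realised are exactly all of $\{0,\dots,k-1\}\setminus\{i_x\}$, since these are $k-1$ consecutive levels. As $i_x$ is even, this set contains the odd residue $1$, and a short check shows the corresponding ancestor $z$ sits at level $\ge 1$ and within distance $k-1$ of $x$, so it is a genuine vertex lying in an odd class. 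The walk $x \xrightarrow{c} z \xrightarrow{B} h_0 \xrightarrow{A} y$, formed from the $c$-edge, the up-path $P_{G_{i_z}}(z,h_0)$ (colours in $B$) and the reversed up-path $P_{G_{i_y}}(y,h_0)$ (colours in $A$), then uses $c$ once and otherwise disjoint $A$- and $B$-colours, so it is rainbow. The case of $i_x,i_y$ both odd is symmetric, using the even residue $0$; here, if the nearest such ancestor is $h_0$ itself, the walk degenerates harmlessly to $x\xrightarrow{c}h_0\to y$.

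I expect the only real work to be the bookkeeping in the previous paragraph: verifying, across the sub-cases $i_x=0$ and $i_x\ge 2$ (and the odd analogues), that the opposite-parity ancestor we jump to exists at a non-negative level and at distance at most $k-1$, so the bridging edge is indeed present in $T^k$ and coloured $c$. Everything else reduces to the two ingredients above, together with the fact that $|A|=|B|=\lceil d(T)/k\rceil\ge d(G_i)$ guarantees enough colours for the layer-wise colourings; in total $f$ uses $2\lceil d(T)/k\rceil+1$ colours.
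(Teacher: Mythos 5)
Your overall strategy is the same as the paper's: when the classes $G_{i_x}$ and $G_{i_y}$ have opposite parity, concatenate the two layer-wise-coloured root paths at $h_{0}$; when they have the same parity, spend the colour $c$ on one bridging edge into an opposite-parity class and reduce to the first case. Your first two paragraphs and the both-even sub-case are correct; in particular, targeting the residue-$1$ level is sound there, because a level $\equiv 1 \pmod{k}$ is never $0$, so the target ancestor is never the root and the bridging edge is genuinely outside every $G_{i}$, hence coloured $c$.

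The gap is in the both-odd sub-case. When $\ell_{T}(x) < k$, the ancestor of $x$ at the nearest level $\equiv 0 \pmod{k}$ is $h_{0}$ itself, and you assert that the walk ``degenerates harmlessly to $x \rightarrow h_{0} \rightarrow y$'' with the first edge coloured $c$. But that edge is \emph{not} coloured $c$: since $\ell_{T}(x) \le k$ we have $par(x) = h_{0}$, so $(x, h_{0})$ is an edge of $G_{i_x}$ and receives colour $b_{1}$ from the layer-wise colouring $f_{G_{i_x}, B}$. The continuation from $h_{0}$ to $y$ inside $G_{i_y}$ also begins with an edge coloured $b_{1}$, so the walk repeats $b_{1}$ and is not rainbow. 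Concretely, take $\ell_{T}(x) = 1$ and $\ell_{T}(y) = k+1$ in different branches of $T$, so that $dist_{T}(x,y) = k+2 > k$: both vertices lie in $G_{1}$, your walk $x, h_{0}, par(y), y$ is coloured $b_{1}, b_{1}, b_{2}$, and $x$ has no other ancestor to jump to. The repair is to perform the $c$-hop from the endpoint that admits a non-root opposite-parity ancestor within distance $k-1$ (at least one endpoint always does unless $x$ and $y$ are adjacent in $T^{k}$); this is essentially what the paper does, ordering $\ell_{T}(v) \ge \ell_{T}(u)$, disposing of the case where both levels are at most $1$ by adjacency, and otherwise stepping from $v$ to its parent in $T$, which lies at level at least $1$ and is reached by a genuine $c$-edge. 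With that one change your argument goes through.
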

\begin{proof}
Let $u$ and $v$ be two distinct vertices of $T^{k}$.
Without loss of generality let $u \ne h_{0}$. Then $u \in G_{i}$ where $0 \le i \le k-1$. By \textit{Observation }\ref{RootPath} there is an
\textit{A-Rainbow-Path} (\textit{B-Rainbow-Path}) from $u$ to $h_{0}$ if $i$ is \textit{even} (\textit{odd}). Now we can assume that
$u, v \ne h_{0}$.
Let $u \in V(G_{i})$ and $v \in V(G_{j})$.
To illustrate a rainbow path between $u$ and $v$ we consider the following two cases.
\\
\\
\textit{\textbf{Case 1:}} \textbf{[When $|i - j| \equiv 1 \mod 2$]}
\\
Without loss of generality let $i \equiv 0 \mod 2$ and $j \equiv 1 \mod 2$. 

Let $Q_{1}$ = $P_{G_{i}}(u,\ h_{0})$ and $Q_{2} = P_{G_{j}}(h_{0}, v)$ be the
\textit{A} and \textit{B-Rainbow-Paths} in $G_{i}$ and $G_{j}$ with respect to the \textit{Layer-wise Colorings} $f_{G_{i},A}$
and $f_{G_{j},B}$ respectively (See \textit{Observation }\ref{RootPath}). It follows that $Q_{1}$ and $Q_{2}$ are
\textit{A} and \textit{B-}\textit{Rainbow-Paths} in $T^{k}$ with respect to edge coloring $f$.
Clearly $Q$ = $Q_{1}$.$Q_{2}$ is
a \textit{(A $\cup$ B)-Rainbow-Path} from vertex $u$ to vertex $v$.
\\
\\
\textit{\textbf{Case 2:}} \textbf{[When $|i - j| \equiv 0 \mod 2$]}
\\
Without loss of generality we may assume that $\ell_{T}(v) \ge \ell_{T}(u)$.
\\
\indent
If $(u, v) \in E(T^{k})$ then there is a trivial rainbow path between them.
If $\ell_{T}(u_{1}) \le 1$ and $\ell_{T}(u_{2}) \le 1$ then ($u_{1}$, $u_{2}$) $\in$ $E(T^{k})$ (since $k \ge 2$).
We consider the case when $(u, v) \notin E(T^{k})$. This happens when
the level number of one of the vertices is $\ge 2$ i.e. 
$\ell_{T}(v) \ge 2$. Let $v_{1}$ $\in$ $V(T^{k})$ be the parent of  $v$ in $T$. Since $\ell_{T}(v) \ge 2$, $v_{1} \ne h_{0}$.
Let $v_{1} \in G_{l}$ where $\ell_{T}(v_{1}) = \ell_{T}(v)-1 \equiv l \mod k$.
From \textit{Case 1} we know that there is a $(A \cup B$)-\textit{Rainbow-Path}, say $P$,
between vertices $u$ and $v_{1}$ since $|i - l| \equiv 1 \mod 2$. 
Edge $(v, v_{1})$ is colored $c$ since $(v, v_{1}) \notin E(G_{i})$ for any $0 \le i \le k-1$.
Extending $P$ by edge $(v, v_{1})$ we get the required rainbow path between vertices $u$ and $v$.
\\
\noindent
We have thus proved that $f$ is a rainbow coloring of $T^{k}$.
\end{proof}
\begin{thm}
\label{PowerThm}
If $H$ is any connected, non-trivial graph then for all $k \ge 2$,  $r(H^{k})\le rc(H^{k}) \le$ \textit{2$r(H^{k})$ + 1}.
\end{thm}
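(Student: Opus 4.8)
The plan is to reduce the theorem to the claim just proved, exactly along the lines the setup has already laid out.

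The lower bound $r(H^k) \le rc(H^k)$ is immediate, since the radius (indeed the diameter) is always a lower bound on the rainbow connection number of any connected graph. So the content is the upper bound $rc(H^k) \le 2r(H^k)+1$. First I would recall the reduction already established before the claim: taking $T$ to be a BFS-tree of $H$ rooted at a central vertex $h_0$, the depth satisfies $d(T)=r(H)$, and $T^k$ is a spanning subgraph of $H^k$, so that $rc(H^k)\le rc(T^k)$. Thus it suffices to bound $rc(T^k)$ in terms of $\lceil d(T)/k\rceil = r(H^k)$.

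Next I would invoke Claim \ref{PowerClaim}, which asserts that the explicitly constructed edge coloring $f$ is a rainbow coloring of $T^k$. The only remaining task is to count the colors used by $f$. By construction, $f$ takes values in $A \uplus B \uplus \{c\}$, where $A = \{a_i : 1 \le i \le \lceil d(T)/k\rceil\}$ and $B = \{b_i : 1 \le i \le \lceil d(T)/k\rceil\}$. Hence $|A| = |B| = \lceil d(T)/k \rceil = r(H^k)$, and together with the single extra color $c$ this gives a total of $2r(H^k)+1$ colors. Therefore $rc(T^k) \le 2r(H^k)+1$, and combining with the reduction yields $rc(H^k) \le rc(T^k) \le 2r(H^k)+1$.

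The proof is essentially a bookkeeping step once the claim is in hand, so I do not expect a genuine obstacle; the only point requiring a moment of care is verifying that the color set sizes are exactly $\lceil d(T)/k\rceil$ and that $\lceil d(T)/k\rceil$ really equals $r(H^k)$. The latter follows from $r(H^k) = \lceil r(H)/k\rceil$ together with $d(T)=r(H)$, both of which are recorded in the surrounding text. I would state these identities explicitly to make the count transparent, and then assemble the two bounds into the claimed sandwich $r(H^k)\le rc(H^k)\le 2r(H^k)+1$.
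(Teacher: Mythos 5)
Your proposal is correct and follows exactly the paper's own argument: the reduction $rc(H^k)\le rc(T^k)$ via the BFS-tree at a central vertex, an appeal to Claim \ref{PowerClaim}, a count of $|A|+|B|+|\{c\}| = 2\lceil d(T)/k\rceil + 1 = 2r(H^k)+1$ colors, and the trivial lower bound. Your version is slightly more explicit about the identities $d(T)=r(H)$ and $r(H^k)=\lceil r(H)/k\rceil$, which is a minor improvement in transparency but not a different route.
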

\begin{proof}
The edge coloring $f$ uses $|A| + |B| + |\{c\}| = 2r(H^{k})+1$ colors.
The upper bound follows from \textit{Claim} \ref{PowerClaim}. The lower bound is trivial.
\end{proof}

\noindent \textbf{Tight Example:} \newline
Let $H$ be a path on $2kr+1$ vertices. It is easy to see that $rc(H^k) \ge diam(H^k)=2r(H^k)$.

{
\section{Rainbow Connection Number of the Cartesian Product of Two Non-trivial Graphs $G'$ and $H'$}
\label{SectionCart}
}
\paragraph{}
Recall that the Cartesian product, $G' \Box H'$, of two graphs $G'$ and $H'$ is defined as follows: $V(G' \Box H')$ =
$V(G')\ \times \ V(H')$. Two distinct vertices [$g_{1}$, $h_{1}$] and [$g_{2}$, $h_{2}$] of $G' \Box H'$ are adjacent if \textbf{either}
$g_{1}$ = $g_{2}$ and ($h_{1}$, $h_{2}$) $\in E(H')$ \textbf{or} ($g_{1}$, $g_{2}$) $\in E(G')$ and $h_{1} =\ h_{2}$. It is easy
to verify that $diam(G' \Box H') = diam(G') + diam(H')$ and that $r(G' \Box H') = r(G') + r(H')$. See \cite{imrich2000product} for proof.
\paragraph{}
Let \textit{G} be the \textit{Breadth-First-Search-Tree} (\textit{BFS-Tree}) rooted at some central vertex, say $g_{0}$, of 
\textit{$G'$}. Similarly let \textit{H} be the \textit{BFS-Tree} rooted at some central vertex, say $h_{0}$, of \textit{$H'$}. 
We have that \textit{d(G)} = \textit{r($G'$)} and \textit{d(H)} = \textit{r($H'$)}
where $d(G)$ and $d(H)$ are the depths of trees $G$ and $H$ respectively.
Clearly $G \Box H$ is 
a connected spanning subgraph of $G'\Box H'$ and therefore $rc(G' \Box H')$ $\le$ $rc(G \Box H)$. 
So in order to derive an upper bound for $rc(G' \Box H')$ in terms of $r(G' \Box H')$
it is sufficient to derive an upper bound for $rc(G \Box H)$
in terms of $r(G' \Box H')$.
\paragraph{}
Let $V(G)$ = 
\{ $g_{0}$, $g_{1}$, $\dots$, $g_{|G|-1}$\} and $V(H)$ = \{ $h_{0}$, $h_{1}$, $\dots$, $h_{|H|-1}$\}. 
Let $G_{1}, \dots G_{|H|-1}$, $H_{1}, \dots,$ $H_{|G|-1}$ be the \textit{(G,H)-Decomposition} of $G \Box H$ as defined in \textit{Definiton-}\ref{GHDecomposition}.
For $0 \le i \le |H|-1$ define $root(G_{i}) = [g_{0}, h_{i}]$ and for $0 \le j \le |G|-1$ define $root(H_{j}) = [g_{j}, h_{0}]$.
\\
Recall the following simple observations.
\begin{obs}
 \label{CartVert}
$V(G_{i}) \cap V(H_{j})$ = \{$[g_{j},\ h_{i}]$\},
$V(G_{i}) \cap V(G_{j})$ = $\emptyset$ and $V(H_{i}) \cap V(H_{j})$ = $\emptyset$, for all $i \neq j$.
\end{obs}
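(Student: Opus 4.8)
The plan is to read the claim directly off the coordinate description of the vertex sets fixed in the $(G,H)$-Decomposition (Definition~\ref{GHDecomposition}); in fact items (2)--(4) of that definition already assert exactly these three facts, so the only work is to reverify them from first principles. Recall that $V(G_{i}) = \{[g_{a}, h_{i}] : 0 \le a \le |G|-1\}$, i.e. the copy of $G$ in which every vertex has its second coordinate fixed to $h_{i}$, while $V(H_{j}) = \{[g_{j}, h_{b}] : 0 \le b \le |H|-1\}$ is the copy of $H$ in which every vertex has its first coordinate fixed to $g_{j}$.

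First I would settle $V(G_{i}) \cap V(H_{j})$. A vertex lying in both sets must simultaneously be expressible as $[g_{a}, h_{i}]$ and as $[g_{j}, h_{b}]$; equating the two coordinates forces $a = j$ and $b = i$, so the unique candidate is $[g_{j}, h_{i}]$, and this vertex does indeed belong to both sets. Hence the intersection is the singleton $\{[g_{j}, h_{i}]\}$, as claimed.

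Next I would treat the two disjointness statements, which are even simpler. For $i \ne j$, every vertex of $V(G_{i})$ has second coordinate $h_{i}$ whereas every vertex of $V(G_{j})$ has second coordinate $h_{j}$; since distinct indices name distinct vertices of $H$, we have $h_{i} \ne h_{j}$, so no vertex can lie in both and $V(G_{i}) \cap V(G_{j}) = \emptyset$. The argument for $V(H_{i}) \cap V(H_{j}) = \emptyset$ is entirely symmetric, comparing the first coordinates $g_{i}$ and $g_{j}$ instead.

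Since each of the three verifications is a single coordinate comparison, there is no genuine obstacle here: the statement is pure bookkeeping that merely records the combinatorial structure of the decomposition so that it can be invoked cleanly when the rainbow colouring of $G \Box H$ is constructed. The only point requiring any care is keeping the running indices distinct from the fixed subscripts $i$ and $j$, which is why I introduce auxiliary indices $a$ and $b$ above.
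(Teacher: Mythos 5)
Your proposal is correct and matches the paper's treatment: the paper states this as a recollection of items (2)--(4) of Definition~\ref{GHDecomposition} and offers no further proof, and your coordinate-comparison verification is exactly the bookkeeping implicit there. Nothing is missing.
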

\begin{obs}
 \label{edgu}
 $E(G \Box H) = \biguplus^{|H|-1}_{i=0} E(G_{i}) \biguplus^{|G|-1}_{j=0} E(H_{j})$
\end{obs}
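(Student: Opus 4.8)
The plan is to unpack the two assertions folded into the symbol $\biguplus$: that every edge of $G \Box H$ belongs to some member of the collection $G_0,\dots,G_{|H|-1},H_0,\dots,H_{|G|-1}$, and that these subgraphs are pairwise edge-disjoint. Both are immediate from the adjacency rule defining the Cartesian product together with the structural properties already recorded in Definition \ref{GHDecomposition} and Observation \ref{CartVert}; the argument is essentially bookkeeping, so the work lies in organizing it cleanly.

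First I would classify the edges by type. By definition, two distinct vertices $[g_1,h_1]$ and $[g_2,h_2]$ are adjacent in $G \Box H$ precisely when either (i) $g_1=g_2$ and $(h_1,h_2)\in E(H)$, or (ii) $h_1=h_2$ and $(g_1,g_2)\in E(G)$. The two cases are mutually exclusive, since $g_1=g_2$ and $h_1=h_2$ holding simultaneously would force the endpoints to coincide. Hence every edge of $G \Box H$ satisfies exactly one of (i), (ii), and the full edge set partitions into the type-(i) edges and the type-(ii) edges.

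Next I would match each type to the appropriate fibers. A type-(i) edge shares its first coordinate, say $g_1=g_2=g_j$, so both endpoints lie in $V(H_j)=\{[g_j,h_\ell]:0\le \ell\le |H|-1\}$ and the edge belongs to $E(H_j)$; conversely every edge of $E(H_j)$ arises this way. Thus the type-(i) edges are exactly $\bigcup_j E(H_j)$. Symmetrically, a type-(ii) edge shares its second coordinate and therefore lies in some $E(G_i)$, so the type-(ii) edges are exactly $\bigcup_i E(G_i)$. Combining this with the type partition yields the set equality $E(G\Box H)=\bigcup_i E(G_i)\cup\bigcup_j E(H_j)$.

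Finally I would verify disjointness, which is what promotes the $\cup$ to $\biguplus$. By property 2 of Definition \ref{GHDecomposition} the $G_i$ have pairwise disjoint vertex sets, hence disjoint edge sets; property 3 gives the same for the $H_j$; and property 4 (equivalently, the mutual exclusivity of types (i) and (ii)) gives $E(G_i)\cap E(H_j)=\emptyset$ for all $i,j$. I expect no genuine obstacle here: the single point meriting attention is the type-exclusivity observation, since it is precisely what prevents any edge from being counted in both a $G$-fiber and an $H$-fiber and thereby guarantees that the decomposition is a true disjoint union.
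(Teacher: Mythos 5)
Your proof is correct: the paper states this as an unproved ``simple observation,'' and your argument---classifying edges by which coordinate is fixed, noting the two types are mutually exclusive, and invoking properties 2--4 of Definition~\ref{GHDecomposition} for disjointness---is exactly the bookkeeping the paper implicitly relies on. Nothing is missing.
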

\paragraph{}
We now define an edge coloring, $f$: $E(G \Box H)$ $\rightarrow$ $A \uplus B \uplus C \uplus D$ where
$A$ = \{ $a_{i}$ :$\ $1 $\le$ $i$ $\le$ $d(G)$ \},
$B$ = \{ $b_{i}$ :$\ $1 $\le$ $i$ $\le$ $d(G)$ \},
$C$ = \{ $c_{i}$ :$\ $1 $\le$ $i$ $\le$ $d(H)$ \} and
$D$ = \{ $d_{i}$ :$\ $1 $\le$ $i$ $\le$ $d(H)$ \} are ordered sets of colors. 
In view of \textit{Observation-}\ref{edgu} it is clear that 
in order to define the coloring $f$, it is sufficient to 
describe separately, an edge coloring for each
$G_{i}$,
0 $\le$ $i$ $\le$ $|H|-1$ and
an edge coloring for each $H_{j}$, 0 $\le$ $j$ $\le$ $|G|-1$.
We choose Layer-wise Coloring $f_{G_{0},A}$ to be the edge coloring
of $G_{0}$ and $f_{G_{i},B}$ to be the edge coloring of $G_{i}$ for 1 $\le$ $i$ $\le$ $|H|\ -\ 1$. Similarly we choose Layer-wise Coloring
$f_{H_{0},C}$ to be the edge coloring
of $H_{0}$ and $f_{H_{i},D}$ to be the edge coloring of $H_{i}$ for 1 $\le$ $i$ $\le$ $|G|\ -\ 1$.
\begin{clm}
\label{CartProClaim}
The edge coloring, $f$, is a rainbow coloring of $G \Box H$.
\end{clm}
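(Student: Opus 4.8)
The plan is to show that any two distinct vertices $u = [g_a, h_b]$ and $v = [g_c, h_d]$ of $G \Box H$ are joined by a rainbow path built by concatenating at most four \emph{monotone} legs, where each leg lies entirely inside one copy $G_i$ or $H_j$ and runs between a vertex and one of its ancestors (or descendants) in that copy. By Observation~\ref{RootPath} every such leg is internally rainbow, and since (by Observation~\ref{edgu}) the four families of copies $G_0$, $\{G_i\}_{i\ge 1}$, $H_0$, $\{H_j\}_{j\ge 1}$ draw their colours from the pairwise disjoint palettes $A,B,C,D$, it suffices to route so that \emph{no palette is used by more than one leg}. The anchor points for the routing are the root $[g_0,h_b]$ of $G_b$, the root $[g_a,h_0]$ of $H_a$, and the global centre $[g_0,h_0]$.

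First I would dispose of the generic case. Consider the route $u=[g_a,h_b] \to [g_0,h_b] \to [g_0,h_0] \to [g_c,h_0] \to [g_c,h_d]=v$, whose legs lie in $G_b, H_0, G_0, H_c$ and hence take colours from $B, C, A, D$ respectively; provided $b \neq 0$ and $c \neq 0$ these palettes are distinct, so the concatenation is rainbow. The mirror route $u \to [g_a,h_0] \to [g_0,h_0] \to [g_0,h_d] \to v$ uses copies $H_a, G_0, H_0, G_d$, i.e.\ palettes $D, A, C, B$, and works whenever $a \neq 0$ and $d \neq 0$. Together these cover every pair with $(b\neq 0 \wedge c\neq 0)$ or $(a\neq 0 \wedge d\neq 0)$.

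It then remains to handle pairs violating both conditions, i.e.\ $(b=0 \vee c=0) \wedge (a=0 \vee d=0)$. If one endpoint is the centre $[g_0,h_0]$, a two-leg path (one $G_0$-leg and one $H$-leg) is trivially rainbow. A short logical check shows the only other surviving subcases are: both endpoints in the central row (so $u=[g_a,h_0]$, $v=[g_c,h_0]$ with $a,c\neq 0$), or both in the central column (symmetric). For the row case I would detour through a child $h_s$ of $h_0$ in $H$ (which exists since $H'$ is non-trivial, so $d(H)\ge 1$), taking $[g_a,h_0] \to [g_0,h_0] \to [g_0,h_s] \to [g_c,h_s] \to [g_c,h_0]$; its legs lie in $G_0, H_0, G_s, H_c$ and thus use the four distinct palettes $A, C, B, D$. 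The column case is symmetric, detouring through a child $g_t$ of $g_0$ in $G$. Every leg is again an ancestor--descendant path, and no palette repeats, so each such path is rainbow.

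The main obstacle is precisely the bookkeeping that forces the two $G$-direction legs to lie in \emph{different} $G$-copies (one in the central row $G_0$, giving $A$, and one in a non-central row, giving $B$), and likewise for the two $H$-direction legs; this is exactly why $G_0$ and $H_0$ are coloured apart from the remaining copies, and why the central-row and central-column pairs need a deliberate detour. A subtler point to verify is that each leg must be genuinely monotone: one cannot replace a leg by an arbitrary tree path between two vertices, since such a path bends at their least common ancestor and would repeat a colour within its palette, so every leg is chosen to run strictly toward or away from its copy's root. Finally, tallying the palettes, $f$ uses $|A|+|B|+|C|+|D| = 2d(G)+2d(H) = 2\bigl(r(G')+r(H')\bigr) = 2\,r(G' \Box H')$ colours, which is exactly the bound required by the theorem that follows.
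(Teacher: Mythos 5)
Your proof is correct and follows essentially the same strategy as the paper: route every pair through the centre $[g_0,h_0]$ via at most four monotone ancestor--descendant legs lying in copies coloured from the four disjoint palettes, with a deliberate detour (through a child of $h_0$ or of $g_0$) exactly when both endpoints lie in the central row $G_0$ or the central column $H_0$. Your two generic routes and the leftover-case analysis correspond directly to the paper's Cases 1.a/1.b and 2.a--2.c, so no further changes are needed.
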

\begin{proof}
Let $u = [g_{i}, h_{j}]$ and $v = [g_{k}, h_{l}]$ be two distinct vertices of $G \Box H$. We demonstrate
a rainbow path between $u$ and $v$, by considering the following cases:
\\
\\
\textit{\textbf{Case 1:}} \textbf{[At least one of the vertices belong to $V(G \Box H)$ \textbackslash ($V(G_{0}) \cup$ $ V(H_{0})$)]}
\\
\\
Without loss of generality let $v \in$ $V(G \Box H)$ \textbackslash ($V(G_{0}) \cup V(H_{0})$)
i.e. $l \ne 0$ and $k \ne 0$. 
We now consider the following
two \textit{sub-cases}.
\\
\\
\textit{Case 1.a:} [Vertex $u \notin V(G_{0})$,  hence $j \ne 0$]
\\
Vertex $v = [g_{k}, h_{l}]$ $\in$ $V(H_{k})$ and
$root(H_{k})$ = $[g_{k},\ h_{0}]$. Let $Q_{1}$ = $P_{H_{k}}(v,\ [g_{k},\ h_{0}])$, is a \textit{D-Rainbow-Path} in $H_{k}$
with respect to the coloring $f_{H_{k},D}$, by \textit{observation} \ref{RootPath}.
Similarly let $Q_{2} = $ $P_{G_{0}}([g_{k},\ h_{0}],\ [g_{0},\ h_{0}])$, $Q_{3}$ = $P_{H_{0}}([g_{0},\ h_{0}],\ [g_{0},\ h_{j}])$
and $Q_{4}$ = $P_{G_{j}}([g_{0},\ h_{j}],\ [g_{i},\ h_{j}])$ be \textit{A-, C- and B-Rainbow-Paths} in $G_{0},\ H_{0}$ and $G_{j}$ ($j \ne 0$)
respectively.
It follows that $Q_{1}, Q_{2}, Q_{3}$ and $Q_{4}$ are $D$-,$A$-,$C$- and $B$\textit{-Rainbow-Paths} in $G \Box H$ 
with respect to the coloring $f$.
Clearly $Q$ = $Q_{1}.$ $Q_{2}.$ $Q_{3}.$ $Q_{4}$ is a \textit{rainbow walk} from $v$ to $u$ in $G \Box H$
that contains a \textit{rainbow path} between them.
\\
\\
\textit{Case 1.b:} [Vertex $u \in V(G_{0})$, hence $u = [g_{i}, h_{0}]$]
\\
Vertex $v \in V_{G_{l}}$, let $Q_{1}$ = $P_{G_{l}}(v, [g_{0},\ h_{l}])$, is a
\textit{B-Rainbow-Path} in $G_{l}$ with respect to edge coloring $f_{G_{l},B}$, by \textit{Observation-}\ref{RootPath}. 
Similarly let $Q_{2}$ = $P_{H_{0}}([g_{0},\ h_{l}],\ [g_{0},\ h_{0}])$ and
$Q_{3}$ = 
$P_{G_{0}}([g_{0},\ h_{0}],\ [g_{i},\ h_{0}])$ be $C$- and $A$\textit{-Rainbow-Paths} in $H_{0}$ and
$G_{0}$ respectively. It follows that $Q_{1},\ Q_{2}$ and $Q_{3}$ are \textit{B-, C-} and \textit{A-Rainbow-Paths} in $G \Box H$
with respect to the coloring $f$.
Clearly $Q$ = $Q_{1}.$ $Q_{2}.$ $Q_{3}.$ is a \textit{rainbow walk} from $v$ to $u$ 
 in $G \Box H$ that contains a rainbow path between them.
\\
\\
\textit{\textbf{Case 2:}} \textbf{[Both the vertices belong to $V(G_{0}) \cup V(H_{0})$]}
\\
Without loss of generality let $v \neq [g_{0}, h_{0}]$. We consider the following 3 sub-cases:
\\
\\
\textit{Case 2.a:} [Both the vertices belong to $V(H_{0})$, hence $u = [g_{0}, h_{j}]$ and $v = [g_{0}, h_{l}]$]
\\ 
Vertex $v = [g_{0}, h_{l}] \in V(G_{l})$.
Let $v'$ = [$g_{k'}$, $h_{l}$] be another vertex in $G_{l}$
such that ($v$, $v'$) $\in$ $E(G_{l})$. 
The existence of $v'$ is guaranteed since $G'$ $\ne$ $K_{1}$.
Let $Q_{1}$ = $P_{G_{l}}(v,\ v')$ i.e. the single edge $(v, v')$
is a \textit{B-Rainbow-Path} in $G_{l}$ with respect to the coloring $f_{G_{l},B}$, noting that $l \ne 0$ by the assumption
that $v \ne [g_{0}, h_{0}]$. 
Similarly let $Q_{2}$ = $P_{H_{k'}}(v',\ [g_{k'}, h_{0}])$, 
$Q_{3}$ = $P_{G_{0}}([g_{k'}, h_{0}],\ [g_{0}, h_{0}])$ and 
$Q_{4}$ = $P_{H_{0}}([g_{0},\ h_{0}],\ [g_{0},\ h_{j}])$ be \textit{D-, A-} and \textit{C-Rainbow-Paths} in $H_{k'},\ G_{0}$
and $H_{0}$ respectively. It follows that $Q_{1},\ Q_{2},\ Q_{3}$ and $Q_{4}$ are 
\textit{B-, D-, A-} and \textit{C-Rainbow-Paths} in $G \Box H$ with respect to coloring $f$.
Clearly $Q$ = $Q_{1}.$ $Q_{2}.$ $Q_{3}.$ $Q_{4}.$ is a \textit{rainbow walk} from \textit{v} to \textit{u} 
 in $G \Box H$ that contains a 
\textit{rainbow path} between them.
\\
\\
\textit{Case 2.b:} [Both the vertices belong to $V(G_{0})$, hence $u = [g_{i}, h_{0}]$ and $v = [g_{k}, h_{0}]$]
\\
Vertex $v \in V(H_{k})$.
Let $v'$ = [$g_{k}$, $h_{l'}$] be another vertex in $H_{k}$ such that ($v$, $v'$) $\in$ $E(H_{k})$.
The existence of $v'$ is guaranteed since $H' \ne K_{1}$.
Let $Q_{1}$ = $P_{H_{k}}(v,\ v')$ i.e. the single edge $(v, v')$
is a \textit{D-Rainbow-Path} in $H_{k}$ with respect to the coloring $f_{H_{k},D}$,
noting that $l \ne 0$ by the assumption that $v \ne [g_{0}, h_{0}]$. 
Similarly let $Q_{2}$ = 
$P_{G_{l'}}(v',\ [g_{0},\ h_{l'}])$, 
$Q_{3}$ = $P_{H_{0}}([g_{0},\ h_{l'}],\ [g_{0},\ h_{0}])$ and 
$Q_{4}$ = $P_{G_{0}}([g_{0},\ h_{0}], [g_{i},\ h_{0}])$ be \textit{B-, C-} and \textit{A-Rainbow-Paths} in $G_{l'}$,
$H_{0}$ and $G_{0}$ respectively. It follows that $Q_{1}, Q_{2}, Q_{3}$ and $Q_{4}$ are \textit{D-, B-, C-} and \textit{A-Rainbow-Paths}
in $G \Box H$ with respect to coloring $f$.
Clearly $Q$ = 
$Q_{1}.$ $Q_{2}.$ $Q_{3}.$ $Q_{4}.$ is a \textit{rainbow walk} from $v$ to $u$ 
 in $G \Box H$ that contains a \textit{rainbow path}
between them.
\\
\\
\textit{Case 2.c:} [One vertex belongs to $V(G_{0})$ and the other to $V(H_{0})$]
\\
Without loss of generality let $u \in V(G_{0})$, $v \in V(H_{0})$ then $j = 0$ and $l = 0$.
In view of \textit{Cases} $2.a$ and $2.b$ we can assume that $u, v \ne [g_{0}, h_{0}]$.

Let $Q_{1}$ = $P_{H_{0}}(v, [g_{0}, h_{0}])$ and $Q_{2}$ = $P_{G_{0}}([g_{0}, h_{0}], u)$
is a \textit{C-} and \textit{A-Rainbow-Paths} in $H_{0}$ and $G_{0}$ respectively.
It follows that $Q_{1}$ and $Q_{2}$ are $C$- and $A$\textit{-Rainbow-Paths} in $G \Box H$ with respect to the coloring $f$.
Clearly $Q$ = $Q_{1}. Q_{2}$ is a \textit{rainbow walk} from
vertex $v$ to vertex $u$ in $G \Box H$ that contains a \textit{rainbow path} between them.
\\
\\
It follows that $f$ is a rainbow coloring of $G \Box H$.
\end{proof}
\begin{thm}
\label{cart}
 If $G'$ and $H'$ are two non-trivial, connected graphs then $r(G' \Box H') \le rc(G' \Box H')$ $\le$ $2r(G' \Box H')$
\end{thm}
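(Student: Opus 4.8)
The plan is to mirror exactly the structure used for Theorem~\ref{PowerThm}: the lower bound is immediate, and the upper bound reduces to counting the colors consumed by the coloring $f$ whose correctness has already been established in Claim~\ref{CartProClaim}. Since the radius (and diameter) of any connected graph is a trivial lower bound for its rainbow connection number, the inequality $r(G' \Box H') \le rc(G' \Box H')$ requires no further argument.

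For the upper bound, I would first invoke the reduction already set up before the claim: $G \Box H$ is a connected spanning subgraph of $G' \Box H'$, and therefore $rc(G' \Box H') \le rc(G \Box H)$. It then suffices to bound $rc(G \Box H)$. By Claim~\ref{CartProClaim}, the edge coloring $f$ is a rainbow coloring of $G \Box H$, so $rc(G \Box H)$ is at most the number of colors that $f$ uses. The coloring takes values in $A \uplus B \uplus C \uplus D$, and these sets are disjoint with sizes $|A| = |B| = d(G)$ and $|C| = |D| = d(H)$.

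The only remaining step is to translate this color count into an expression in $r(G' \Box H')$. Recalling that $G$ and $H$ were chosen as BFS-trees rooted at central vertices, so that $d(G) = r(G')$ and $d(H) = r(H')$, and recalling the stated identity $r(G' \Box H') = r(G') + r(H')$, I would compute the total number of colors as $|A| + |B| + |C| + |D| = 2d(G) + 2d(H) = 2\bigl(r(G') + r(H')\bigr) = 2r(G' \Box H')$. Chaining the inequalities gives $rc(G' \Box H') \le rc(G \Box H) \le 2r(G' \Box H')$, which completes the proof.

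I do not anticipate a genuine obstacle here, since all the combinatorial difficulty has been absorbed into Claim~\ref{CartProClaim}; the theorem itself is essentially a bookkeeping statement. The one point deserving care is ensuring the four palettes are counted as genuinely disjoint (so that no color is double-counted and the bound is exactly $2r(G' \Box H')$ rather than something larger), which is guaranteed by the explicit disjoint union $A \uplus B \uplus C \uplus D$ in the definition of $f$.
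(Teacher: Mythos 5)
Your proposal matches the paper's proof exactly: both establish the lower bound as the trivial radius bound and obtain the upper bound by counting the $|A|+|B|+|C|+|D| = 2(d(G)+d(H)) = 2(r(G')+r(H')) = 2r(G' \Box H')$ colors used by the coloring $f$ validated in Claim~\ref{CartProClaim}, via the spanning-subgraph reduction $rc(G' \Box H') \le rc(G \Box H)$. No issues.
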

\begin{proof}
The edge coloring $f$ uses $|A| + |B| + |C| + |D|$ = $2(d(G) + d(H))$ = $2(r(G') + r(H'))$ = $2r(G' \Box H')$ number 
of colors. The upper bound follows from \textit{Claim-}\ref{CartProClaim} and the lower bound is obvious.
\end{proof}
\noindent
\textbf{Tight Example:}
\\
Consider two graphs $G_{1}$ and $G_{2}$ such that $diam(G_{1})$ = $2r(G_{1})$ and $diam(G_{2})$ = $2r(G_{2})$. For
example $G_{1}$ and $G_{2}$ may be taken as
paths with odd number of vertices. Then $diam(G_{1} \Box G_{2})$ = $diam(G_{1})$ + $diam(G_{2})$ = 
$2(rG_{1}) + r(H_{1}))$.
\section{Rainbow Connection Number of the Lexicographic Product of Two Non-trivial Graphs $G'$ and $H$}
\label{SectionLex}
Recall that the lexicographic product, $G' \circ H$, of two graphs $G'$ and $H$ is defined as follows: $V(G' \circ H)$ = $V(G') \times V(H)$.
Two distinct vertices [$g_{1}$, $h_{1}$] and [$g_{2}$, $h_{2}$] of $G' \circ H$  are adjacent if \textit{either} ($g_{1}$, $g_{2}$) $\in$
$E(G')$ \textbf{or} $g_{1}$ = $g_{2}$ and ($h_{1}$, $h_{2}$) $\in$ $E(H)$. 
Note that unlike the \textit{Cartesian Product} and the \textit{Strong Product}, the \textit{Lexicographic Product} is a
non-commutative product. Thus $G' \circ H$ need not be isomorphic to $H \circ G'$.
Also note that if $G'$ and $H$ are non-trivial graphs then $r(G' \circ H) = 1$ if and only if $r(G') = 1$ and $r(H) = 1$.
\begin{thm}
\label{LexicoTheorem}
Given two non-trivial graphs $G'$ and $H$ such that $G'$ is connected we have the following:
\begin{enumerate}
 \item {
\label{LexicoTheoremge2}
If $r(G' \circ H) \ge 2$ then
$r(G' \circ H) \le rc(G' \circ H) \le 2r(G' \circ H)$. This bound is tight.
}
\item {
\label{LexicoTheoremeq1}
If $r(G' \circ H) = 1$ then
$1 \le rc(G' \circ H) \le 3$. This bound is tight.
}
\end{enumerate}
\end{thm}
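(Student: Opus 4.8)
The plan is to treat the two parts separately, reducing each to an explicit edge colouring of a convenient spanning subgraph and then verifying rainbow connectivity by a routing scheme. Throughout I would exploit the defining feature of the lexicographic product: if $(g_1,g_2)\in E(G')$ then \emph{every} vertex of the copy over $g_1$ is adjacent to \emph{every} vertex of the copy over $g_2$, so adjacent copies are joined by a complete bipartite graph. I would first record the distance facts: $dist_{G'\circ H}([g_1,h_1],[g_2,h_2])=dist_{G'}(g_1,g_2)$ whenever $g_1\ne g_2$ (any edge either fixes the $G'$-coordinate or moves along a $G'$-edge), while two vertices in a common copy are at distance $1$ or $2$ (route out to a neighbouring copy and back). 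These give $r(G'\circ H)=r(G')$ when $r(G')\ge 2$, and $r(G'\circ H)=2$ when $r(G')=1,\ r(H)\ge 2$, matching the stated ranges; in both parts the lower bounds are the trivial $r(G'\circ H)\le rc(G'\circ H)$.

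For part (1) the main case is $r(G')\ge 2$, where $r:=r(G'\circ H)=r(G')$. Let $G$ be a BFS tree of $G'$ rooted at a central vertex $g_0$, so $d(G)=r(G')=r$, and note that $G\circ \overline{K}_{|H|}$ is a connected spanning subgraph of $G'\circ H$; hence it suffices to rainbow colour it. I would use colour sets $A=\{a_1,\dots,a_r\}$ and $B=\{b_1,\dots,b_r\}$, attaching $a_i,b_i$ to the complete-bipartite ``transition'' between a level-$i$ copy and its parent copy. The guiding idea is an \emph{up--turn--down} route: to connect $u$ and $v$, climb from $u$ to the root copy using only $A$-coloured edges, make a single turn inside the root copy that switches from an $a$-edge to a $b$-edge, and then descend to $v$ using only $B$-coloured edges. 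Since the $A$-segment uses the distinct colours $a_{\ell(u)},\dots,a_1$, the $B$-segment the distinct colours $b_1,\dots,b_{\ell(v)}$, and $A\cap B=\emptyset$, the resulting object is a rainbow \emph{walk} using at most $\ell(u)+\ell(v)\le 2r$ colours; as elsewhere in the paper it is enough to exhibit a rainbow walk, which always contains a rainbow path. The remaining sub-case $r(G')=1$ (forcing $r(H)\ge 2$ and $r(G'\circ H)=2$) would be handled separately by a direct $4$-colouring, since there $2r=4$ but the BFS tree has depth only $1$; this sub-case is closer in spirit to part (2) below.

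The colouring itself is where the work lies, and I expect it to be the main obstacle. For transitions at levels $i\ge 2$ I would colour by a \emph{parent reference}, fixing two vertices $h_0\ne h_1$ of $H$: edges landing on the parent's $h_0$ get $a_i$ and those landing on the parent's $h_1$ get $b_i$. This lets any vertex climb via $A$ (always stepping onto the ancestor's $h_0$) or via $B$ (onto the ancestor's $h_1$). For the level-$1$ transitions I would instead use a \emph{child reference}, arranged so that every vertex of the \emph{root} copy simultaneously owns an $a_1$-edge and a $b_1$-edge down to each child copy; this bichromaticity at the root is exactly what permits the single turn ($a_1$ in, $b_1$ out), whereas a uniform reference would make each root vertex monochromatic and block the turn. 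The delicate point is reconciling these patterns: the ``reference'' structure that keeps the long climbs rainbow also tends to force all edges from a fixed vertex into one neighbouring copy to share a colour, which is fatal for short routes. The cases I must still make work are (i) one of $u,v$ in the root copy; (ii) one of them at level $1$, where the descent is empty and the final edge into $v$ is forced to a single colour; and (iii) $u,v$ in the \emph{same} copy (distance $2$), needing a two-edge rainbow detour through an adjacent copy. I would resolve (i)--(ii) by choosing, per pair, whether to climb $u$ via $A$ or via $B$ so that the two colours meeting at the turn differ. The genuinely tight case is (iii) when $|H|\ge 3$, where two same-copy vertices may share an ``entry colour''; I would route the detour through a copy whose transition is coloured so that its two legs receive different colours, and guaranteeing that such a copy and such a colouring coexist with the climbing requirements is the crux that must be engineered most carefully.

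For part (2), $r(G'\circ H)=1$ forces $r(G')=r(H)=1$, so $G'$ has a universal vertex $g_0$ and $H$ a universal vertex $h_0$; then $[g_0,h_0]$ is universal in the product, the diameter is at most $2$, and the only non-adjacent pairs are vertices sharing a copy with non-adjacent $H$-parts, or vertices over non-adjacent vertices of $G'$. I would exhibit an explicit $3$-colouring and connect every such pair by a length-$2$ path whose two edges receive different colours, using the universal vertex together with the complete joins induced by $g_0$; routing through the universal vertex alone is not enough (that would need the complement to be $3$-chromatic), so the complete-join structure must be used, and the third colour supplies exactly the slack to separate the two edge-colours in every case. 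The value $1$ is attained precisely when the product is complete, and tightness at $3$ is witnessed by a small explicit example (for instance a product with a star factor), giving the stated tight range $1\le rc(G'\circ H)\le 3$.
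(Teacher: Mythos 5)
Your high-level plan for Part 1 is essentially the paper's: pass to a BFS tree $G$ of $G'$ rooted at a central vertex, work in a connected spanning subgraph, use two palettes $A=\{a_1,\dots,a_r\}$ and $B=\{b_1,\dots,b_r\}$, and route every pair by an up--turn--down walk (climb to the root copy on $A$, switch, descend on $B$). But the proposal stops exactly where the proof has to start. The entire difficulty of this theorem is the explicit colouring that makes the turn and the short routes work inside the budget of $2r$ colours, and you leave that unconstructed: you describe a ``parent reference'' versus ``child reference'' scheme in outline, do not say how edges landing on the parent's vertices other than $h_0,h_1$ are coloured, and explicitly defer cases (i)--(iii) --- one endpoint in the root copy, one endpoint at level $1$, and two endpoints in the same copy --- calling the last ``the crux that must be engineered most carefully.'' That crux is the theorem. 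The paper resolves it by designating two distinguished tree-copies, $G_0$ coloured layer-wise from $A$ and $G_1$ coloured layer-wise from the modified palette $B'$ in which $b_1$ is replaced by $a_r$, colouring all intra-copy ($H_j$) edges $b_1$, and giving seven explicit rules for the cross edges that recycle $a_r$ and $b_r$ for the near-root hops; this reuse is precisely why the hypothesis $r\ge 2$ is needed and why the count stays at $2r$ rather than $2r+O(1)$. Until you write down rules of this kind and verify the same-copy and low-level cases against them, you have a plausible strategy, not a proof. You also omit the tight example for Part 1 entirely (the paper takes $G$ with $diam(G)=2r(G)\ge 4$ and any non-trivial $H$, so that $diam(G\circ H)=2r(G\circ H)$).

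For Part 2 the gap is of the same nature. You say you ``would exhibit an explicit $3$-colouring'' and correctly observe that routing every non-adjacent pair through the single universal vertex cannot work in general (it would amount to properly $3$-colouring the non-adjacency relation), so the complete joins must be exploited --- but no colouring is exhibited, so the upper bound of $3$ is asserted, not proved. The paper avoids this construction altogether: it notes that $G'\circ H$ is $2$-connected, that the universal vertex is therefore a connected two-way dominating set, and applies the theorem of Chandran et al.\ that $rc(G)\le rc(G[D])+3$, giving $rc(G'\circ H)\le 0+3=3$ in one line. Likewise, tightness of the value $3$ requires an argument, not just a candidate example: the paper takes $G'=K_{1,n}$ with $n\ge 2^{|H|}+1$ and uses pigeonhole on the $2^{|H|}$ possible colour patterns of the edges from a leaf-copy vertex $[g_i,h_0]$ into $H_0$ to produce two such vertices with identical patterns, between which every length-$2$ path is monochromatic under any $2$-colouring. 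You should either supply these constructions or, for Part 2, locate and invoke the dominating-set theorem as the paper does.
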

\subsection*{Part 1: $r(G' \circ H) \ge 2$}
Since $r(G' \circ H) \ge 2$, either $r(G') \ge 2$ or $r(H) \ge 2$. In either case it can be shown that $r(G' \circ H) \ge r(G')$.
Let $G$ be the \textit{BFS-Tree} rooted at some central vertex, say $g_{0}$, of graph $G'$. 
It is easy to see that the depth of $G$, \textit{d(G)} = $r(G')$.
Since $G \circ H$
is a connected spanning subgraph of $G' \circ H$, $rc(G' \circ H) \le rc(G \circ H)$. In order to derive an upper bound
for $rc(G' \circ H)$ in terms of $r(G' \circ H)$ 
it is sufficient to derive an upper bound for $rc(G \circ H)$ in terms of
$r(G' \circ H)$.
\paragraph{}
Let $V(G)$ = 
\{ $g_{i}$: 0 $\le$ $i$ $\le$ $|G| \ -\ 1$ \} and $V(H)$ = \{ $h_{i}$: 0 $\le$ $i$ $\le$ $|H| \ -\ 1$ \}.
Since $G$ is connected and non-trivial, vertex $g_{0}$ has at least one neighbor. We label this neighbor as $g_{1}$ 
i.e. $(g_{0}, g_{1}) \in E(G)$. 
Since $H$ is a non-trivial graph, there are at least two vertices in $H$ $-$ $h_{0}$ and $h_{1}$. 
Note that $(h_{0}, h_{1})$ need not be an edge in $H$.
It is easy to see that $G \Box H$ is a spanning subgraph of $G \circ H$. 
\paragraph
\indent
It is easy to see that $G \Box H$ is a spanning subgraph of $G \circ H$. 
Let $G_{0}, G_{1} \dots,$ $G_{|H|-1}$, $H_{0}, H_{1}, \dots, H_{|G|-1}$
be the \textit{(G,H)-Decomposition} of the subgraph of $G \circ H$ that is isomorphic to $G \Box H$ (See \textit{Definition }\ref{GHDecomposition}).
Recall that every $G_{i}$ is isomorphic to $G$ and every $H_{j}$ is isomorphic to $H$.
We define $root(G_{i}) =$ $[g_{0}, h_{i}]$ and $root(H_{j}) = [g_{j}, h_{0}]$. 
From \textit{Observation }\ref{CartVert} we know that any vertex 
$[g_{i}, h_{j}]$ belongs to both $G_{j}$ and $H_{i}$. 
\\
\\
\textit{\textbf{Special note on notation:}}
\\
In the rest of this section for any vertex
$v = [g_{i}, h_{j}] \in V(G_{j})$, we abuse the notation and simply use
$\ell(v)$ $/\ell([g_{i}, h_{j}])$ instead $\ell_{G_{j}}(v)$ $/\ell_{G_{j}}([g_{i}, h_{j}])$. Note that $\ell_{H}(v)$ need not make sense as 
$H$ need not be a tree.
\begin{defi}
\label{LexR2Part}
Let $E_{1} = \biguplus^{|H|-1}_{i=0}E(G_{i}) \ \biguplus^{|G|-1}_{j=0}E(H_{j})$
and $E_{2} = E(G \circ H) \setminus E_{1}$. 
\end{defi}
\noindent
We now define an edge coloring, $f: E(G \circ H) \rightarrow A \uplus B$ where $A = \{a_{i}: 1 \le i \le r(G' \circ H)\}$
and $B = \{b_{i}: 1 \le i \le r(G' \circ H)\}$ are ordered sets of colors. 
Since $r(G' \circ H) \ge 2$, both the sets $A$ and $B$ are of cardinality at least $2$.
Since $E(G \circ H) = E_{1} \uplus E_{2}$, it is enough to define separately
a coloring for $E_{1}$ and a coloring for $E_{2}$.
\\
\\
\textbf{\textit{Coloring the edges of $E_{1}$}}:
\\
\indent
To define a coloring of $E_{1}$ it is enough to define an edge colorings for each $G_{i}$, $0 \le i \le |H|-1$
and an edge coloring for each $H_{j}$, $0 \le j \le |G|-1$.
We choose the  
 \textit{Layer-wise Coloring}, $f_{G_{0},A}$ (as defined in \textit{Definition} \ref{ColorFunc})
to color the edges of $G_{0}$. 

\indent
We define a new ordered set, $B' = \{b'_{i}: 1 \le i \le r(G' \circ H)\}$ where $b'_{1} = a_{r(G' \circ H)} \in A$
and for $2 \le i \le r(G' \circ H)$, $b'_{i} = b_{i} \in B$.
For $1 \le i \le |H| - 1$, we choose the \textit{Layer-wise Coloring} $f_{G_{i}, B'}$ to be the edge coloring of $G_{i}$.
For $0 \le j \le |G| - 1$, we color all the edges of $H_{j}$ using the color $b_{1}$.
\\
\\
\textit{\textbf{Coloring the edges of $E_{2}$}}:
\\
For any vertex $v \in V(G \circ H)$ let $\mathcal{E}(v)$ be the set of edges from $E_{2}$ that are incident on $v$. We partition
$\mathcal{E}(v)$ into two sets $\mathcal{E}_{L}(v)$ and $\mathcal{E}_{U}(v)$. 
Consider some edge $(v, u) \in \mathcal{E}(v)$, then $(v, u) \in \mathcal{E}_{L}(v)$ if and only if $\ell(u) > \ell(v)$ and
$(v, u) \in \mathcal{E}_{U}(v)$ if and only if $\ell(u) < \ell(v)$.
For two vertices $v_{1}$ and $v_{2} \in V(G \circ H)$ we have that
$(v_{1}, v_{2}) \in \mathcal{E}_{L}(v_{1})$ \textit{if and only if} $(v_{1}, v_{2}) \in \mathcal{E}_{U}(v_{2})$.
\\
To color the edges of $E_{2}$ we have the following set of rules:
\begin{enumerate}
 \item[]$Rule\ \#1:$ All the edges of $\mathcal{E}_{L}([g_{0}, h_{0}])$ are colored $b_{1}$.
\item[]$Rule\ \#2:$ For all $v \in V(G_{0}) \setminus [g_{0}, h_{0}]$, all the edges of $\mathcal{E}_{L}(v)$ are colored $a_{\ell(v)+1}$.
\item[]$Rule\ \#3:$ All the edges of $\mathcal{E}_{U}([g_{i}, h_{0}])$, where $\ell([g_{i}, h_{0}]) = 1$, are colored $b_{r(G' \circ H)}$.
\item[]$Rule\ \#4:$ All the edges of $\mathcal{E}_{L}([g_{0}, h_{1}])$ $\setminus$
$\{([g_{0}, h_{1}], [g_{i}, h_{0}]): \ell([g_{i}, h_{0}]) = 1\}$ are colored $a_{r(G' \circ H)}$.
\item[]$Rule\ \#5:$ For all $v \in V(G_{1}) \setminus [g_{0}, h_{1}]$, all the edges from $\mathcal{E}_{L}(v)$ are colored $b_{\ell(v)+1}$.
\item[]$Rule\ \#6:$ All the edges of $\mathcal{E}_{U}([g_{i}, h_{1}])$
$\setminus$ $\{([g_{i}, h_{1}], [g_{0}, h_{0}])\}$, where $\ell_{G}(g_{i}) = 1$, are colored $a_{r(G' \circ H)}$.
\item[]$Rule\ \#7:$ All the remaining edges of $E_{2}$ are colored $b_{1}$.
\end{enumerate}

\begin{clm}
 \label{LexClaim}
The coloring $f$ is a rainbow coloring of the edges of $G \circ H$.
\end{clm}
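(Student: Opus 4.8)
The plan is to show that any two distinct vertices $u = [g_{i_1}, h_{j_1}]$ and $v = [g_{i_2}, h_{j_2}]$ of $G \circ H$ are joined by a rainbow path. Write $\rho = r(G' \circ H)$, so that $A = \{a_1, \dots, a_\rho\}$ and $B = \{b_1, \dots, b_\rho\}$ are disjoint and each has size at least $\max\{2, d(G)\}$. The whole construction is engineered so that, apart from the two colours $a_\rho$ and $b_1$ that carry a double role, everything reachable by $A$-coloured edges is kept disjoint from everything reachable by $B$-coloured edges. Exploiting this, I would route $u$ to the hub $[g_0, h_0]$ along a path that uses only colours of $A$, and route $v$ to the same hub $[g_0, h_0]$ along a path that uses only colours of $B$; since $A \cap B = \emptyset$, the concatenation is a rainbow walk through $[g_0, h_0]$ and therefore contains a rainbow $u$--$v$ path, exactly as in the proof of Claim~\ref{CartProClaim}.

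The two routing schemes are the core of the argument. For a vertex $w = [g_m, h_d]$ of level $\ell(w) \ge 2$, the \emph{$A$-route} first takes the single $E_2$ edge from $w$ to $[\,\mathrm{parent}_G(g_m), h_0\,]$, the $h_0$-copy of the parent of $g_m$, and then descends the column $G_0$ to $[g_0, h_0]$. By Rule \#2 --- which is set up precisely so that these $E_2$ edges reproduce the layer-wise colours of $G_0$ --- together with $f_{G_0, A}$ and Observation~\ref{RootPath}, this route realises the distinct colours $a_{\ell(w)}, a_{\ell(w)-1}, \dots, a_1$. Symmetrically, the \emph{$B$-route} slides toward the column $G_1$ using Rule \#5 (which reproduces the layer-wise colours of $G_1$), descends $G_1$ only as far as its last level-$1$ vertex, and then diverts straight to $[g_0, h_0]$ along the Rule \#1 edge coloured $b_1$; this gives the distinct colours $b_{\ell(w)}, \dots, b_2, b_1$. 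The crucial point is that the $B$-route stops short of the level-$1$ edge of $G_1$, which is coloured $a_\rho$ (the value of $b'_1$), so it never leaves the family $B$. Assigning the $A$-route to $u$ and the $B$-route to $v$ then gives two colour-disjoint paths meeting at $[g_0, h_0]$.

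It remains to treat the vertices where a scheme degenerates, namely the level-$1$ vertices and the column roots $[g_0, h_d]$ of level $0$, for which the ``slide and descend'' has no interior. Here I would use the vertices guaranteed by non-triviality --- the neighbour $g_1$ of $g_0$ in $G$ and the two vertices $h_0, h_1$ of $H$ --- together with the two hubs $[g_0, h_0]$ and $[g_0, h_1]$, to supply a short first step that lands in a level-$1$ vertex, after which the schemes above apply. The level-$1$ rules (Rules \#3, \#4 and \#6) are exactly the colours placed on the $E_2$ edges incident to the roots and to the level-$1$ copies at $h_0$ and $h_1$, and the case analysis would branch on whether each of $u, v$ is a root, lies in $G_0$, lies in $G_1$, or is generic, mirroring the sub-case structure of Claim~\ref{CartProClaim} but with these bridges inserted. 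A preliminary observation is that every $E_2$ edge joins levels differing by exactly $1$ (because $G$ is a tree), so it has a well-defined lower and upper endpoint; the exclusions written into Rules \#4 and \#6 remove exactly the edges already coloured by Rules \#1 and \#3, which is what makes $f$ a well-defined function on $E_2$.

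The main obstacle is the bookkeeping forced by the two overloaded colours. The colour $a_\rho$ is simultaneously $b'_1$ --- the level-$1$ colour of every column $G_i$ with $i \ge 1$ --- and the colour assigned by Rules \#4 and \#6, while $b_1$ colours every copy $H_j$ as well as the Rule \#1 and Rule \#7 edges; I must check that any one of the paths constructed above meets each of these at most once. In the generic regime $d(G) < \rho$ this is automatic: the $A$-route uses only $a_1, \dots, a_{\ell(u)}$ with $\ell(u) \le d(G) < \rho$ and so avoids $a_\rho$, while the $B$-route avoids $a_\rho$ by diverting at level $1$, so the two palettes genuinely stay disjoint and the few degenerate routes that must borrow $a_\rho$ or $b_1$ (for instance a root $[g_0, h_d]$ reaching the hub through $[g_1, h_1]$) cause no clash. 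The tightest point --- and the part that needs the most care --- is the boundary case $d(G) = r(G' \circ H)$, where a column can be as deep as the whole palette is long and an $A$-route can in principle run all the way up to $a_\rho$; there the choice of which endpoint is sent along which scheme, and the insistence that the $B$-route divert through the $b_1$-edge rather than the $a_\rho$-edge of $G_1$, is what preserves rainbow-ness.
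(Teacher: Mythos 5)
Your core mechanism is the same as the paper's: send one vertex to the hub $[g_0,h_0]$ along an $A$-coloured descent through $G_0$ (the paper's observation (a), using Rule \#2 to enter $G_0$ one level down) and the other along a $B$-coloured descent through $G_1$ that stops at level $1$ and diverts to the hub via the Rule \#1 edge coloured $b_1$ (observation (b)), then concatenate. You have also correctly identified the two overloaded colours $a_{\rho}$ and $b_1$ and why the $B$-route must dodge the $b'_1=a_{\rho}$ edge of $G_1$. For two vertices both of level at least $2$ your argument is complete and matches the paper.

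The gap is in the degenerate cases, which you defer to ``bookkeeping'' but which occupy half of the paper's proof, and your stated recipe for one of them fails. Take $u=[g_0,h_j]$ with $j\neq 0$ (level $0$) and $v$ at level $\ell(v)=\rho=d(G)$. Your plan is to take one short step from $u$ to a level-$1$ vertex and then ``apply the schemes,'' i.e.\ route both legs through the hub. But every two-edge connection from $u$ to $[g_0,h_0]$ costs either $\{b_{\rho},a_1\}$ (via $[g_1,h_0]$, Rule \#3 then $f_{G_0,A}$) or $\{a_{\rho},b_1\}$ (via $[g_1,h_1]$, Rule \#6 then Rule \#1), and when $\ell(v)=\rho$ the $A$-route for $v$ uses $a_{\rho},\dots,a_1$ and the $B$-route uses $b_{\rho},\dots,b_1$, so each of the four pairings repeats a colour. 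The paper's proof does not route this pair through the hub at all: its observation (c) runs the $B$-descent of $v$ down to a level-$1$ vertex $v_4$ of $G_1$ (colours $b_{\ell(v)},\dots,b_2$) and then jumps \emph{directly} to $u\in V(H_0)\setminus\{[g_0,h_0]\}$ along a single edge coloured $a_{\rho}$ (Rule \#6, or the $b'_1$ edge of $G_1$ when $u$ is its root), which uses neither $b_1$ nor any of $a_1,\dots,a_{\rho-1}$. You need this (or an equivalent hub-avoiding connection) to close the case. Similarly, the pairs in which both vertices have level at most $1$ (the paper's Case 2) require explicitly exhibited two- and four-edge rainbow paths through $[g_0,h_0]$, $[g_1,h_0]$, $[g_0,h_1]$ and $[g_1,h_1]$; your sketch names the right bridge vertices but does not verify any of these paths, and since the whole difficulty of the construction is concentrated exactly there, the proof is not complete as written.
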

\begin{proof}
Let $u = [g_{i}, h_{j}]$ and $v = [g_{k}, h_{l}]$ be two distinct vertices of $G \circ H$ such that
$\ell(v) \ge \ell(u)$. 
We demonstrate a rainbow path
between them by considering the following cases.
\\
\\
\textbf{\textit{Case 1:}} \textbf{[When $\ell(v) \ge 2$]}
\\
First we make the following $3$ observations.
\paragraph
\noindent
\textbf{(a):} There exists an \textit{A-Rainbow-Path} from $v$ to the vertex $[g_{0}, h_{0}]$:
\\
\noindent
If $v \in V(G_{0})$, then the path
$P_{G_{0}}(v, [g_{0}, h_{0}])$ is an $A$\textit{-Rainbow-Path} in $G_{0}$ with respect to the edge coloring
$f_{G_{0},A}$ (See \textit{Observation} \ref{RootPath}). 
If $v \notin V(G_{0})$, then $\exists\ v_{1} \in V(G_{0})$ such that $\ell(v_{1}) \ge 1$, $\ell(v_{1}) = \ell(v) - 1$
and $(v_{1}, v) \in \mathcal{E}_{L}(v_{1})$.
Such a vertex always exists since we have assumed that $\ell(v) \ge 2$; $G,\ H$ are non-trivial graphs
and $G$ is connected.
Since $v_{1} \in V(G_{0})$ there is an 
\textit{A-Rainbow-Path} from $v_{1}$ to $[g_{0}, h_{0}]$ as explained earlier, let this path be $P$.
Specifically $P$ is a $\{a_{1}, a_{2}, \dots, a_{\ell(v_{1})}\}$\textit{-Rainbow-Path}. Since edge $(v_{1}, v)$ is colored $a_{\ell(v_{1})+1}$
by $Rule\ \#2$,
we can extend path $P$ by $(v_{1}, v)$ to get the required \textit{A-Rainbow-Path} from $v$ to $[g_{0}, h_{0}]$.
\paragraph
\noindent
\textbf{(b):} There exists a \textit{B-Rainbow-Path} from $v$ to the vertex $[g_{0}, h_{0}]$: 
\\
\noindent
If $v \in V(G_{1})$ then there exists an
ancestor of $v$, say $v_{2}$, in $G_{1}$ such that $\ell(v_{2}) = 1$. The path $P_{1} = P_{G_{1}}(v, v_{2})$
is a $\{b_{\ell(v)}, b_{\ell(v)-1}, \dots, b_{2}\}$\textit{-Rainbow-Path} from $v$
to $v_{2}$ with respect to the edge coloring $f_{G_{1},B'}$. 
The edge $(v_{2}, [g_{0}, h_{0}])$ is colored $b_{1}$ by $Rule\ \#1$.
We can extend $P_{1}$ by edge $(v_{2}, [g_{0}, h_{0}])$ to get the required
\textit{B-Rainbow-Path} from vertex $v$ to $[g_{0}, h_{0}]$.
If $v \notin V(G_{1})$, then there exists $v_{3}$ = $[g_{i'}, h_{1}] \in V(G_{1})$ such that
$(v, v_{3}) \in \mathcal{E}_{L}(v_{3})$. Since $v_{3} \in V(G_{1})$
as explained earlier there is a $\{b_{\ell(v)}, b_{\ell(v)-1}, \dots, b_{2}, b_{1}\}$\textit{-Rainbow-Path}, say $P_{2}$, from $v_{3}$ to $[g_{0}, h_{0}]$.
Since the edge $(v_{3}, v)$ is colored $b_{\ell(v_{3})+1}$ by $Rule\ \#5$, we can extend $P_{2}$ by $(v_{3}, v)$ to get the required
\textit{B-Rainbow-Path} from $v$ to $[g_{0}, h_{0}]$.
\paragraph
\noindent
\textbf{(c):} There exist both $\{b_{\ell(v)}, b_{\ell(v)-1}, \dots, b_{2}, a_{r(G' \circ H)}\}$ and
\\
$\{a_{\ell(v)}, a_{\ell(v)-1}, \dots, a_{2}, b_{r(G' \circ H)}\}$\textit{-Rainbow-Paths} from $v$ to any vertex in $V(H_{0}) \setminus \{[g_{0}, h_{0}]\}$:
\\
\noindent
Recall that $\ell(v) \ge 2$.
From observation $(a)$ it can be inferred that there is a $\{b_{\ell(v)}, b_{\ell(v)-1}, \dots, b_{2}\}$\textit{-Rainbow-Path} from $v$ to some vertex $v_{4} \in V(G_{1})$
such that $\ell(v_{4}) = 1$. For any $v_{5}$ $\in V(H_{0}) \setminus [g_{0}, h_{0}]$, the edge $(v_{4}, v_{5})$ is colored $a_{r(G' \circ H)}$ by $Rule\ \#6$
or by the \textit{Layer-wise Coloring} $f_{G_{1},B'}$ (whatever is applicable). This
implies that there is a $\{b_{\ell(v)}, b_{\ell(v)-1}, \dots, b_{2}, a_{r(G' \circ H)}\}$\textit{-Rainbow-Path} from vertex $v$ to any vertex in $V(H_{0})$
$\setminus \{[g_{0}, h_{0}]\}$.
\\
Similarly from observation $(b)$ it can be inferred that there is a $\{a_{\ell(v)}, a_{\ell(v)-1}, \dots, a_{2}\}$\textit{-Rainbow-Path} from vertex $v$ to some vertex 
$v_{6} \in V(G_{0})$ such that $\ell(v_{6}) = 1$. By $Rule\ \#3$ any vertex in $V(H_{0}) \setminus \{[g_{0}, h_{0}]\}$ is adjacent to $v_{6}$ and is colored $b_{r(G' \circ H)}$.
\paragraph
\noindent
Now consider the different cases involving vertex $u$. If $\ell(u) \ge 2$ then from observations $(a)$ and $(b)$ it follows that $u$ and $v$ are rainbow connected. If $\ell(u) = 0$
then from observation $(c)$ it follows that $u$ and $v$ are rainbow connected. Finally if $\ell(u) = 1$ then we know that $(u, [g_{0}, h_{0}])$ $\in E(G \circ H)$
and is colored either $a_{1}$ or $b_{1}$. Since $v$ has both an $A$ and a $B$\textit{-Rainbow-Path} to $[g_{0}, h_{0}]$. It follows that $u$
and $v$ are rainbow connected.
%
%
\\
\\
\noindent
\textbf{\textit{Case 2:}} \textbf{[When $\ell(v) \le 1$]}
\\
Without loss of generality we assume that vertex $u \ne [g_{0}, h_{0}]$.
\\
\\
\textit{Case 2.a:} [When $\ell(v) \ne \ell(u)$]
\\
Vertices $u$ and $v$ are connected by an edge which is a trivial \textit{rainbow path} between them.
\\
\\
\textit{Case 2.b:} [When $\ell(v) = \ell(u) = 0$, hence $u = [g_{0}, h_{j}]$ and $v = [g_{0}, h_{l}]$]
\\
\indent
If $v = [g_{0}, h_{0}]$ then we claim that the two length path, $P = \{v= [g_{0}, h_{0}]$\}, $[g_{1}, h_{0}], \{[g_{0}, h_{j}] = u\}$
is a rainbow path from $v$ to $u$. The edges of $P$ are colored $a_{1}, b_{r(G' \circ H)}$ in that order.
To see this: edge $(v, [g_{1}, h_{0}]) \in E(G_{0})$ and $G_{0}$ is edge colored using the \textit{Layer-wise Coloring}, $f_{G_{0},A}$. It follows that the edge
is colored $a_{1}$ (See \textit{Observation }\ref{RootPath}). The edge
$([g_{1}, h_{0}], u) \in \mathcal{E}_{U}([g_{1}, h_{0}])$ and is colored $b_{r(G' \circ H)}$ by \textit{Rule $\#3$}.
Note that edge $(v, [g_{1}, h_{0}]) \in E(G \circ H)$ since $G$ is non-trivial and it is assumed that edge $(g_{0}, g_{1}) \in E(G)$.
\\
\indent
If $v \in V(H_{0}) \setminus \{[g_{0}, h_{0}]\}$ then we claim that the four length path, $P = \{u = [g_{0}, h_{j}]\},$ $[g_{1}, h_{0}], [g_{0}, h_{0}],$
$[g_{1}, h_{1}], \{[g_{0}, h_{l}] = v\}$ is a rainbow path from $u$ to $v$. The edges of $P$ are colored
$b_{r(G' \circ H)}$, $a_{1}, b_{1}$, $a_{r(G' \circ H)}$ in that order. To see this: edge 
$(u, [g_{1}, h_{0}]) \in \mathcal{E}_{U}([g_{1}, h_{0}])$ and is colored $b_{r(G' \circ H)}$ by \textit{Rule $\#3$};
edge $([g_{1}, h_{0}], [g_{0}, h_{0}]) \in E(G_{0})$ and is colored $a_{1}$ by the \textit{Layer-wise Coloring }$f_{G_{0},A}$;
edge
$([g_{0}, h_{0}], [g_{1}, h_{1}])\in \mathcal{E}_{L}([g_{0}, h_{0}])$ and is colored $b_{1}$ by $Rule\ \#1$; finally
edge $([g_{1}, h_{1}], v)$ is colored $a_{r(G' \circ H)}$ 
by one of the two applicable rules: \textit{(a):} Edge
$([g_{1}, h_{1}], v) \in E(G_{1})$ and $G_{1}$ is edge colored using the \textit{Layer-wise Coloring} $G_{G_{1},B'}$ or
\textit{(b):} Edge $([g_{1}, h_{1}], v) \in \mathcal{E}_{U}([g_{1}, h_{1}])$
$\setminus \{([g_{0}, h_{0}], [g_{1}, h_{1}])\}$ and is colored $a_{r(G' \circ H)}$ by $Rule\ \#4$.
\\
\\
\textit{Case 2.c:} [When $\ell(v) = \ell(u) = 1$]
\\
\indent
If exactly one of the vertices is in $G_{0}$.
Without loss of generality let $u \in V(G_{0})$ and $v \notin V(G_{0})$ then $u = [g_{i}, h_{0}]$ and $v = [g_{k}, h_{l\ne0}]$. 
We claim that the two length path $P = $
$\{u = [g_{i}, h_{0}]\}, [g_{0}, h_{0}], \{[g_{k}, h_{l}] = v\}$ is a rainbow path from vertex $u$ to vertex $v$. The edges of $P$ are 
colored $a_{1},\ b_{1}$ in that order.
\\
\indent
If $u, v \in V(G_{0})$ then $u = [g_{i}, h_{0}]$ and $v = [g_{k}, h_{0}]$. We claim that the four length path
$P =$ $\{u = [g_{i}, h_{0}]\}, [g_{0}, h_{0}],$ $[g_{1}, h_{1}], [g_{0}, h_{1}],$ $\{v=[g_{k}, h_{0}]\}$ is a rainbow path
from vertex $u$ to vertex $v$. The edges are colored $a_{1}, b_{1}, a_{r(G' \circ H)}, b_{r(G' \circ H)}$ in that order.
\\
\indent
If $u, v \notin V(G_{0})$ then $u = [g_{i}, h_{j\ne0}]$ and $v = [g_{k}, h_{l\ne0}]$.
We claim that the four length path
$P =$ $\{u = [g_{i}, h_{j}]\}, [g_{0}, h_{0}],$ $[g_{1}, h_{0}], [g_{0}, h_{1}],$ $\{v=[g_{k}, h_{l}]\}$ is a rainbow path
from $u$ to $v$. The edges of $P$ are colored $b_{1}, a_{1}, b_{r(G' \circ H)}, a_{r(G' \circ H)}$ in that order.
\paragraph
\noindent
We have thus proved that $f$ is a rainbow coloring of $G \circ H$. Since $f$ uses $2r(G \circ H)$ colors, we have $rc(G \circ H) \le 2r(G \circ H)$. Since it is assumed that $r(G \circ H) \ge 2$ we have proved the upper-bound in \textit{Part }\ref{LexicoTheoremge2} of \textit{Theorem }\ref{LexicoTheorem}.
\end{proof}
\noindent
\textbf{Tight Example:} 
\\
Let $G$ be a connected graph such that $r(G) \ge 2$ and $diam(G) = 2 r(G)$;
let $H$ be any non-trivial graph.
It is easy to see that $diam(G \circ H) = 
diam(G)$ and $r(G \circ H) = r(G)$. Hence we can conclude that $diam(G \circ H) = 2 r(G \circ H)$. We know that $rc(G \circ H) \ge diam(G \circ H)$ 
and $rc(G \circ H) \le 2 r(G \circ H)$ (\textit{Part }\ref{LexicoTheoremge2} from \textit{Theorem }\ref{LexicoTheorem}).
It follows that $rc(G \circ H) = 2 r(G \circ H)$.
\subsection*{Part 2: $r(G' \circ H) = 1$}
We know that if $r(G' \circ H) = 1$ then $r(G') = r(G) = 1$ \textit{and} $r(H) = 1$.

\begin{clm}
 \label{Lexicor1}
If $G'$ and $H$ are two non-trivial graphs such that $r(G' \circ H) = 1$ then $rc(G' \circ H) \le 3$.
\end{clm}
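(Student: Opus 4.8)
The plan is to mimic the reduction used in Part 1. Since $r(G'\circ H)=1$ forces $r(G')=1$ and $r(H)=1$, the graph $G'$ has a universal vertex $g_0$ and $H$ has a universal vertex $h_0$. I would take $G$ to be the BFS-tree of $G'$ rooted at $g_0$, which is a star, and work with the connected spanning subgraph $G\circ H$, so that $rc(G'\circ H)\le rc(G\circ H)$. Writing $C$ for the ``central blob'' $\{[g_0,h]:h\in V(H)\}$ and $B_i$ for the blob over the $i$-th leaf of the star, the subgraph $G\circ H$ has a very clean shape: $C\cong H$ and each $B_i\cong H$, every $B_i$ is completely joined to $C$ (because $g_0$ is adjacent to every leaf in $G$), while two distinct non-central blobs share no edge. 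Thus $G\circ H$ is the join of $C$ with the disjoint union of the $B_i$'s; in particular every pair of vertices is at distance at most $2$, with all connections between different blobs passing through $C$.

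Next I would exhibit an explicit $3$-colouring. Reserve colour $3$ for \emph{all} within-blob edges (inside $C$ and inside every $B_i$), and colour the cross edges between $C$ and each $B_i$ by the same rule $M$: give $([g_0,h],[g_i,h'])$ colour $1$ if $h=h'$ and colour $2$ otherwise. The key property of $M$, viewed as a matrix indexed by $V(H)\times V(H)$, is that its rows are pairwise distinct and its columns are pairwise distinct; in particular column $h_0$ differs from every other column.

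Then I would verify rainbow connectivity case by case. A central and a non-central vertex are adjacent. Two central vertices $[g_0,h],[g_0,\tilde h]$ are joined by a length-$2$ path through any blob vertex $[g_i,h']$ at a column $h'$ where rows $h$ and $\tilde h$ of $M$ differ (distinct rows). Two non-central vertices with \emph{different} $H$-coordinates, whether in the same blob or different blobs, are joined through a central vertex $[g_0,h'']$ chosen at a row where the two relevant columns of $M$ differ (distinct columns); both cross edges then lie in $\{1,2\}$ and differ, so the path is rainbow.

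The delicate case, and the main obstacle, is a pair $[g_i,h],[g_j,h]$ with $i\ne j$ but the \emph{same} $H$-coordinate $h$: these two vertices see identical colours across all of $C$ (their columns coincide), so no length-$2$ path through $C$ can be rainbow. One cannot evade this by making all columns globally distinct, since with only $3$ colours and arbitrarily many blobs that is impossible; hence a detour is forced. The resolution is a length-$3$ path through the centre of one blob: route $[g_i,h]\to\beta_i\to[g_0,h'']\to[g_j,h]$, where $\beta_i=[g_i,h_0]$ (using $h_1$ in place of $h_0$ when $h=h_0$, which is legitimate because $h_0$ is universal in $H$). The first edge is within-blob, hence colour $3$; it then suffices to pick $h''$ so that the two cross edges $(\beta_i,[g_0,h''])$ and $([g_0,h''],[g_j,h])$ get different colours from $\{1,2\}$, which is exactly the assertion that columns $h_0$ and $h$ (respectively $h_0$ and $h_1$) of $M$ differ at row $h''$, guaranteed by the design of $M$. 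Counting the colours used gives $rc(G\circ H)\le 3$ and therefore $rc(G'\circ H)\le 3$.
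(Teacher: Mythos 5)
Your construction is correct, but it is a genuinely different argument from the one in the paper. The paper does not build an explicit colouring at all: it observes that $r(G'\circ H)=1$ gives a universal vertex $u$, checks that $G'\circ H$ is $2$-vertex-connected (hence $2$-edge-connected), and then invokes the theorem of Chandran et al.\ that $rc(G)\le rc(G[D])+3$ for a connected two-way dominating set $D$, applied with $D=\{u\}$ so that $rc(G'\circ H)\le 0+3=3$. Your route instead reduces to the star $G\circ H$ and exhibits an explicit $3$-colouring: colour $3$ on all within-blob edges, and the $\{1,2\}$-valued rule $M$ on the cross edges, with the verification split by whether the two endpoints share their $H$-coordinate. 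I checked the delicate case: for $[g_i,h],[g_j,h]$ with $i\ne j$ the detour $[g_i,h]\to[g_i,h_0]\to[g_0,h_0]\to[g_j,h]$ (or via $h_1$ when $h=h_0$) indeed receives colours $3,1,2$, all edges exist because $g_0$ and $h_0$ are universal, and the four vertices are distinct, so the argument closes. What each approach buys: the paper's proof is three lines but rests on an external black box and gives no explicit colouring; yours is self-contained, elementary, and produces the colouring directly (which matters for the algorithmic claims in the abstract), at the cost of a longer case analysis. Your observation that no $2$-colouring can separate all columns when $G'$ is a large star is exactly the pigeonhole argument the paper uses for its tightness example, so the two treatments dovetail nicely.
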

\begin{proof}
Since $r(G' \circ H) = 1$ there exists an universal vertex, say $u \in V(G' \circ H)$. It is easy to verify that $G' \circ H$
is $2$ vertex connected. Now consider the following theorem:
\\
\textbf{\textit{Theorem Chandran et al.}\cite{chandran2010raindom}:}
\textit{If $D$ is a connected two-way dominating set in a graph G, then $rc(G) \le rc(G[D]) + 3$.}
\\
The proof and definitions involved are given in \textbf{\cite{chandran2010raindom}}.
\paragraph
\indent
The universal vertex, $u$, is a trivial dominating set. Moreover since $G' \circ H$ is two vertex connected and consequently two edge connected, it follows that
$\{u\}$ is a two-way dominating set in $G' \circ H$. As a result $rc(G' \circ H) \le rc(\{u\})$ $+ 3$. Since $rc(\{u\}) = 0$ we have $rc(G' \circ H) \le \ 3$. We have thus proved
the \textit{claim} and the upper-bound in \textit{Part }\ref{LexicoTheoremeq1} of  \textit{Theorem }\ref{LexicoTheorem}.
\end{proof}

\noindent
\textbf{Tight Example:} 
\\
Consider two non-trivial graphs $G$ and $H$ such that $G = K_{1,n}$ (a \textit{star graph}) where $n \ge 2^{m}+1$ 
and $H$ is a graph such that $r(H) = 1$ and $|H| = m$.
We claim that $rc(G \circ H) = 3$.
\begin{proof}
We prove the claim by contradiction.
\\
\indent
Let $f$ be a rainbow coloring of $G \circ H$ using at most $2$ colors, say $a_{1}$ and $a_{2}$.
Let $V(G) = \{g_{0}, g_{1}, \dots, g_{n}\}$ where $g_{0}$ is the central vertex of $G$. Similarly let 
$V(H) = \{h_{0}, h_{1}, \dots, h_{m-1}\}$. Let $H_{0}$ be the induced subgraph of $G \circ H$ with vertex set $V(H_{0}) = $ $\{[g_{0}, h_{i}]: 0 \le i \le m-1\}$.
Graph $H_{0}$ is isomorphic to $H$.
\\
\indent
For $1 \le i \le n$ 
define the function $f_{i}: \{[g_{i}, h_{0}]\} \times V(H_{0}) \rightarrow \{a_{1}, a_{2}\}$ 
as
$f_{i}(([g_{i}, h_{0}],\ [g_{0}, h_{j}])) = f(([g_{i}, h_{0}],\ [g_{0}, h_{j}]))$. Each of the functions, $f_{i}$, are one among
$2^{|H|}$ possible functions. Since $n > 2^{|H|}$, by \textit{pigeon hole principle}
there must exist some $f_{i}$ and $f_{k}$ such that $i \ne k$ and $f_{i} = f_{k}$. If so there is \textit{no rainbow path} 
between the vertices $[g_{i}, h_{0}]$ and $[g_{k}, h_{0}]$ with respect to
the edge coloring $f$. 
This is beacause any rainbow path with respect to $f$ between the two vertices is of length $2$. 
Now any two length path between the vertices is of the form $[g_{i}, h_{0}], v, [g_{k}, h_{0}]$ where $v$ is the intermediate vertex.
It is easy to see that $v \in V(H_{0})$.
We know that $f_{i}([g_{i}, h_{0}], v)$
= $f_{k}([g_{k}, h_{0}], v)$ = $f([g_{i}, h_{0}], v)$ = $f([g_{k}, h_{0}], v)$ for all $v \in V(H_{0})$.
This is a contradiction.
Hence $f$ is not a \textit{rainbow coloring}
of $G \circ H$.
\\
Therefore any rainbow coloring of $G \circ H$ uses at least 3 colors. It follows from \textit{Claim} \ref{Lexicor1} that $rc(G \circ H) = 3$.
\end{proof}
\noindent
\textit{\textbf{Proof of Theorem \ref{LexicoTheorem}}:} The upper bounds follow from \textit{Claim }\ref{LexClaim} and \textit{Claim }\ref{Lexicor1}.
The lower bounds are trivial.
%
%
%
\section{Rainbow Connection Number of the Strong Product of Two Non-Trivial, Connected Graphs $G'$ and $H'$}
\label{SectionStrong}

Recall that the \textit{strong product} of two graphs $G'$ and $H'$, denoted by $G' \boxtimes H'$, is defined as follows: $V(G' \boxtimes H')$ = 
$V(G') \times V(H')$. The edge set of $G' \boxtimes H'$ consists of two types of edges. 
An edge $([g_{1}, h_{1}], [g_{2}, h_{2}])$ is
\textit{Type-1}  if and only if
\textbf{either} $g_{1} = g_{2}$ and $(h_{1}, h_{2})$ $\in E(H')$ \textbf{or} $h_{1} = h_{2}$ and $(g_{1}, g_{2})$ $\in E(G')$. The
edge is of \textit{Type-$2$} if and only if
$(g_{1}, g_{2})$ $\in E(G')$ and $(h_{1}, h_{2})$ $\in E(H')$. 
Let $r_{max}$ = max\{$r(G'), r(H')$\}.
It is easy to see that $r(G' \boxtimes H') = r_{max}$
and $diam(G' \boxtimes H') = max\{diam(G'), diam(H')\}$. See \textbf{\cite{imrich2000product}} for proof.
\paragraph{}
We assume without loss of generality that $r(G') \ge r(H')$ as
$G' \boxtimes H'$ is isomorphic to $H' \boxtimes G'$. 
Let $G$ and $H$ be \textit{BFS-Trees} rooted at some central vertices, $g_{0}$ and $h_{0}$ respectively of $G'$ and $H'$. 
It is easy to see that the depths of $G$ and $H$ are
$d(G) = r(G')$ and $d(H) = r(H')$ respectively. Let $V(G) = \{g_{i}: 0 \le i \le |G|-1\}$ and $V(H) = \{h_{i}: 
0 \le i \le |H|-1\}$. Since $G$ and $H$ are non-trivial connected trees there is atleast one neighbor for $g_{0}$
and $h_{0}$ in $G$ and $H$ respectively. 
In the remainder of the section we always let these vertices be $g_{1}$ and $h_{1}$ respectively.
Therefore $(g_{0}, g_{1}) \in E(G)$ and $(h_{0}, h_{1}) \in E(H)$.
\\
\indent
Let $L_{w}(G)$ = \{$g_{i} \in V(G)$: \textit{$\ell_{G}$($g_{i}$)} = $w$ \} for $0 \le w \le d(G)$ and
$L_{x}(H)$ = \{$h_{i} \in V(H)$: \textit{$\ell_{H}$($h_{i}$)} = $x$ \} for $0 \le x \le d(H)$.
We define $V_{w,x}$ = $L_{w}(G) \times L_{x}(H)$ for $0 \le w \le d(G)$ and $0 \le x \le d(H)$.
\paragraph{}
Since $G \boxtimes H$ is a spanning subgraph of $G' \boxtimes H'$, $rc(G' \boxtimes H') \le rc(G \boxtimes H)$.
So in order to derive an upper bound for $rc(G' \boxtimes H')$ in terms of $r(G' \boxtimes H')$ it is enough to derive
an upper bound for $rc(G \boxtimes H)$ in terms of $d(G) = r_{max} = r(G')$. Recall that we have assumed that $r(G') \ge r(H')$
and therefore $r(G' \boxtimes H') = r(G')$.
\\
\indent
We define an edge coloring, $f: E(G \boxtimes H) \rightarrow A \uplus B \uplus \{c, d\}$ where 
$A = \{a_{i}: 1 \le i \le d(G)\}$ and $B = \{b_{i}: 1 \le i \le d(G)\}$ are ordered sets of colors; and $c$ and $d$
are colors that are not in $A \uplus B$. Since $E(G \boxtimes H)$ is the disjoint union of \textit{Type-$1$} and \textit{Type-$2$}
edges, we can define the coloring for \textit{Type-$1$} and \textit{Type-$2$} edges separately.
\subsection*{Coloring the Type-1 edges}
Note that if we restrict the edge set of $G \boxtimes H$ to \textit{Type-1} edges alone then the subgraph thus obtained
is isomorphic to $G \Box H$, the Cartesian Product of $G$ and $H$.  
Let $G_{1}, G_{2}, \dots, G_{|H|-1}$, $H_{1}, H_{2}, \dots, H_{|G|-1}$ be the \textit{(G-H)-Decomposition} of $G \Box H$
(\textit{Type-$1$} edges) as defined in \textit{Definition }\ref{GHDecomposition}. For $0 \le j \le |H|-1$,
define $root(G_{j}) = [g_{0}, h_{j}]$ and for $0 \le i \le |G|-1$, define $root(H_{i}) = [g_{i}, h_{0}]$
\\
\\
Recall that $A = \{a_{i}: i \le i \le d(G)\}$ and $B = \{b_{i}: 1 \le i \le d(G)\}$ are ordered sets of colors. We define 
several new ordered (multi) sets of colors by slightly modifying the sets $A$ and $B$.
First we define the ordered set, $A_{0} = \{a^{0}_{i}: 1 \le i \le d(G)\}$ where $a^{0}_{1} = c$
and $a^{0}_{i} = a_{i} \in A$ for $2 \le i \le d(G)$.
Also for $1 \le w \le d(H)$, we define ordered multi-sets, 
$A_{w} = \{a^{w}_{i}: 1 \le i \le d(G)\}$ and $B_{w} = \{b^{w}_{i}: 1 \le i \le d(G)\}$
where $a^{w}_{i} = d$ and $b^{w}_{i} = d$ for $1 \le i \le w$ \textbf{and} $a^{w}_{i} = a_{i} \in A$
and $b^{w}_{i} = b_{i} \in B$
for $w+1 \le i \le d(G)$.
\\
\\
\noindent
\textbf{Rules to colors the \textit{Type-$1$} edges:}
\begin{itemize}
 \item[T1-R1:] We choose the \textit{Layer-wise Coloring} $f_{H_{0},A}$ to color the edges of $H_{0}$.
\item[T1-R2:] For each $H_{i}$ such that $\ell_{G}(g_{i}) = 1$, we choose the \textit{Layer-wise coloring} $f_{H_{i},B}$
to color the edges of $H_{i}$.
\item[T1-R3:] For each $H_{i}$ such that $\ell_{G}(g_{i}) \ge 2$, we color all the edges of $H_{i}$ using $d$.
\item[T1-R4:] For $0 \le w \le d(H)$ we choose $f_{G_{i},A_{w}}$ to color the edges of $G_{i}$ if $w$ is \textit{even}
and we choose $f_{G_{i}, B_{w}}$ to the color the edges of $G_{i}$ if $w$ is \textit{odd}.
\end{itemize}
\subsection*{Coloring the Type-$2$ edges}
\begin{obs}
 \label{CrossEdges}
If an edge $([g_{i}, h_{j}], [g_{k}, h_{l}]) \in E(G \boxtimes H)$ is of \textit{Type-$2$}
such that $[g_{i}, h_{j}] \in V_{w,x}$ and $[g_{k}, h_{l}] \in V_{y,z}$ then we have $|w - y| = 1$ and
$|x - z| = 1$.
\end{obs}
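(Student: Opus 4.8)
The plan is to exploit the fact that both $G$ and $H$ are rooted \emph{trees} (the BFS-trees of $G'$ and $H'$), so that every edge in either tree joins a vertex to its parent and hence connects two vertices whose level numbers differ by exactly one.

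First I would simply unwind the definition of a \textit{Type-$2$} edge. By definition, the edge $([g_{i}, h_{j}], [g_{k}, h_{l}])$ being of \textit{Type-$2$} means simultaneously that $(g_{i}, g_{k}) \in E(G)$ \emph{and} $(h_{j}, h_{l}) \in E(H)$. The hypotheses $[g_{i}, h_{j}] \in V_{w,x}$ and $[g_{k}, h_{l}] \in V_{y,z}$ translate, via the definition $V_{w,x} = L_{w}(G) \times L_{x}(H)$, into $\ell_{G}(g_{i}) = w$, $\ell_{H}(h_{j}) = x$, $\ell_{G}(g_{k}) = y$ and $\ell_{H}(h_{l}) = z$.

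Next I would apply the elementary structural property of rooted trees. Since $G$ is a tree and $(g_{i}, g_{k}) \in E(G)$, one of $g_{i}, g_{k}$ must be the parent of the other; by the definition of the parent relation the level number of a vertex exceeds that of its parent by exactly one, so $|\ell_{G}(g_{i}) - \ell_{G}(g_{k})| = |w - y| = 1$. The identical argument applied to the tree $H$ with its edge $(h_{j}, h_{l})$ yields $|\ell_{H}(h_{j}) - \ell_{H}(h_{l})| = |x - z| = 1$, which is precisely the claim.

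There is essentially no hard step here: the whole content is that edges of a rooted tree are parent--child edges and therefore span consecutive levels, a fact already implicit in the level-number machinery set up earlier. The single point worth a moment of care is that this rests on $G$ and $H$ being the auxiliary BFS-\emph{trees}, not the original graphs $G'$ and $H'$ (in which an edge could certainly join two vertices equidistant from a central vertex); the observation would be false for arbitrary graphs, so the reduction to spanning trees performed at the start of the section is what makes it go through.
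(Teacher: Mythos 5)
Your proof is correct and is essentially identical to the paper's: both unwind the definition of a \textit{Type-$2$} edge to obtain edges of the trees $G$ and $H$ and then use the fact that a tree edge joins a vertex to its parent, so the level numbers differ by exactly one in each coordinate. Your closing remark that the argument hinges on $G$ and $H$ being the auxiliary BFS-trees (not the original graphs) is a correct and worthwhile observation, though the paper leaves it implicit.
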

\begin{proof}
Since the edge $([g_{i}, h_{j}], [g_{k}, h_{l}])$ is of \textit{Type-$2$}, edges $(g_{i}, g_{k})$ and $(h_{j}, h_{l})$
are edges of trees $G$ and $H$ respectively. Therefore $|w - y| = |\ell_{G}(g_{i}) - \ell_{G}(g_{k})| = 1$
and $|x- z| = |\ell_{H}(h_{j}) - \ell_{H}(h_{l})| = 1$.
\end{proof}
\noindent
\textbf{Rules to colors the \textit{Type-$2$} edges:}
\begin{itemize}
 \item[T2-R1:]
Let $([g_{i}, h_{j}], [g_{k}, h_{l}]) \in E(G \boxtimes H)$ be an edge of \textit{Type-$2$} such that $[g_{i}, h_{j}] \in V_{y,z}$
and $[g_{k}, h_{l}] \in V_{y+1,z+1}$, then define
\begin{center}
$f(([g_{i}, h_{j}], [g_{k}, h_{l}]))$ =
$\begin{cases}
a_{z+1}\ if\ |z - y|\ is\ even \\
b_{z+1}\ if\ |z - y|\ is\ odd
\end{cases}$
\end{center}
Note that $z+1 = \ell_{H}(h_{l}) \le d(H) \le d(G)$ and therefore $a_{z+1}$ and $b_{z+1}$ exist.
\item[T2-R2:] Let $([g_{i}, h_{j}], [g_{k}, h_{l}]) \in E(G \boxtimes H)$ 
such that $[g_{i}, h_{j}] \in V_{1,1}$ and $[g_{k}, h_{l}] \in V_{2, 0}$ then we choose
$f(([g_{i}, h_{j}], [g_{k}, h_{l}])) = a_{2}$. 
\\
\\
Note that if $[g_{k}, h_{l}] \in V_{2,0}$ then $\ell_{G}(g_{k}) = 2$ and thus $d(G) \ge 2$ and $a_{2}$ exists.
\item[T2-R3:] All the remaining edges of \textit{Type-$2$} are colored $d$.
\end{itemize}

\noindent
\textbf{A-Reachable and B-Reachable Vertices:}
\\
We define the following $2$ concepts
with respect to the edge coloring $f$.
We define a vertex $[g_{i}, h_{j}] \in V(G \boxtimes H)$ to be \textit{A-Reachable} if there exists an \textit{A-Rainbow-Path} 
from $[g_{i}, h_{j}]$ to the
vertex $[g_{0}, h_{0}]$. We define $[g_{i}, h_{j}]$ to be \textit{B-Reachable} if there exists a \textit{B-Rainbow-Path}
from $[g_{i}, h_{j}]$ to some vertex in $V_{1,0}$.
\\
\\
We define two subsets, $R_{A}$ and $R_{B}$ of $V(G \boxtimes H)$:
\begin{eqnarray}
\label{StrongRA}
\nonumber
 R_{A} &=& \biguplus_{0 \le z \le d(H)}V_{0,z} \biguplus_{1 \le y \le z,\ |y-z|\ is\ even}V_{y,z} \biguplus_{2 \le y \le d(G)}V_{y,0} 
\biguplus_{2 \le z < y,\ z\ is\ even} V_{y,z} 
\\ \nonumber
R_{B} &=& \biguplus_{0 \le z \le d(H)}V_{1,z}\ \biguplus_{2 \le y \le z,\ |y-z|\ is\ odd}V_{y,z}\ 
\biguplus_{z < y,\ z\ is\ odd}V_{y,z}
\end{eqnarray}
\noindent
It is easy to verify that 
$R_{A} \cup R_{B} = V(G \boxtimes H)$, but $R_{A} \cap R_{B}$ is non-empty.
\begin{clm}
\label{rar}
 If $u \in R_{A}$, then $u$ is \textit{A-Reachable} with respect to the edge coloring $f$.
\end{clm}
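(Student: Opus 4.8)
The plan is to fix an arbitrary $u = [g_{i}, h_{j}] \in R_{A}$, abbreviate its coordinates by $(w,x) = (\ell_{G}(g_{i}), \ell_{H}(h_{j}))$, and for each of the four blocks in the definition of $R_{A}$ build an explicit \textit{A-Rainbow-Path} to $[g_{0}, h_{0}]$ whose colours are exactly $a_{\max(w,x)}, a_{\max(w,x)-1}, \dots, a_{1}$, each occurring once. The argument is a case analysis assembled from three elementary \emph{descent moves}; for each move I would check that the edge it uses exists in $G \boxtimes H$, that $f$ gives it an $A$-colour, and that this colour is $a_{t}$ with a strictly decreasing index $t$, so that no colour repeats. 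That the resulting walk is in fact a path is then a direct check, since the visited vertices are pairwise distinct.

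The three moves are the following. A \emph{vertical} move descends inside $H_{0}$: by rule T1-R1 and Observation \ref{RootPath} it joins any vertex of $V_{0,x}$ to $[g_{0}, h_{0}]$ through the colours $a_{x}, \dots, a_{1}$. A \emph{diagonal} move uses a Type-$2$ edge from $V_{y,z}$ to $V_{y-1,z-1}$: by rule T2-R1 this edge is coloured $a_{z}$ precisely when $z-y$ is even. A \emph{horizontal} move descends inside a copy $G_{j}$ seated at an even $H$-level $z$: by rule T1-R4 this copy is coloured by $A_{z}$, so the edges above $G$-level $z$ receive $a_{z+1}, a_{z+2}, \dots$ while the edges at or below level $z$ collapse to $d$. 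In each move the needed edge is produced by following parent pointers in the BFS-trees $G$ and $H$, using the standing assumption that $g_{0}$ and $h_{0}$ have neighbours $g_{1}$ and $h_{1}$.

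With these in hand I would dispatch the blocks in increasing order of difficulty. The block $V_{0,z}$ is the base case, handled by a single vertical move. For $V_{y,z}$ with $1 \le y \le z$ and $z-y$ even I would apply the diagonal move $y$ times (the quantity $z-y$ is invariant along a diagonal, so every step stays $A$-coloured), arriving at $V_{0,z-y}$ with colours $a_{z}, \dots, a_{z-y+1}$, and finish with the vertical move for $a_{z-y}, \dots, a_{1}$. For $V_{y,z}$ with $2 \le z < y$ and $z$ even I would first run the horizontal move down to $V_{z,z}$, collecting $a_{y}, \dots, a_{z+1}$ (this is exactly where the hypothesis ``$z$ even'' is used, to force $A_{z}$ rather than $B_{z}$), and then reduce to the previous block, since $V_{z,z}$ satisfies $z - z = 0$ even.

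The block I expect to be the real obstacle is $V_{y,0}$ with $y \ge 2$. The naive descent straight down $G_{0}$ breaks, because T1-R4 colours $G_{0}$ by $A_{0}$, whose first colour $a^{0}_{1} = c$ is not an $A$-colour, so the final edge into $[g_{0}, h_{0}]$ is illegal for an \textit{A-Rainbow-Path}. The remedy is to descend $G_{0}$ only as far as $V_{2,0}$ (harvesting $a_{y}, \dots, a_{3}$) and then sidestep the forbidden $c$-edge through $V_{1,1}$: the edge between $V_{2,0}$ and $V_{1,1}$ is coloured $a_{2}$ by the special rule T2-R2, and the edge between $V_{1,1}$ and $[g_{0}, h_{0}]$ is coloured $a_{1}$ by T2-R1 (here $|z-y| = 0$ is even), which yields the full palette $a_{y}, \dots, a_{1}$. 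Once the four blocks are verified, the only residual work is the bookkeeping of index monotonicity and vertex-distinctness noted above; the companion statement for $R_{B}$ (the next claim) would then follow by the symmetric argument, exchanging the roles of $A$ and $B$ and replacing the target $[g_{0}, h_{0}]$ by $V_{1,0}$.
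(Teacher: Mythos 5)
Your proposal is correct and follows essentially the same route as the paper's proof: the same four-block case analysis, the same diagonal descent via T2-R1 for $1\le y\le z$, the same horizontal descent in $G_{j}$ to $V_{z,z}$ for $y>z\ge 2$ with $z$ even, and the same detour through $V_{1,1}$ via T2-R2 to avoid the $c$-coloured edge in the $V_{y,0}$ case. Nothing further is needed.
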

\begin{proof}
Let $u = [g_{i}, h_{j}] \in V_{y,z}$. We consider the following $4$ cases.
\\
\\
\textit{\textbf{Case 1:}} \textbf{[When $u \in V_{0,z}$ where $0 \le z \le d(H)$]}
\\
From \textit{Rule T1-R1} we know that the edges of $H_{0}$ are colored using the \textit{Layer-wise Coloring}, $f_{H_{0}, A}$. Hence
by \textit{Observation }\ref{RootPath} there is an \textit{A-Rainbow-Path} from vertex $u$ to $root(H_{0}) = [g_{0}, h_{0}]$. It follows
that $u$ is \textit{A-Reachable}.
\\
\\
\textit{\textbf{Case 2:}} \textbf{[When $u \in V_{y,z}$ where $1 \le y \le z$ and $|y-z|$ is \textit{even}]}
\\
Since $\ell_{G}(g_{i}) = y$, the path from $g_{i}$ to $g_{0}$ in $G$ has $y+1$ vertices. 
Let this path be $g_{i}=g_{i_{0}}, g_{i_{1}}, \dots, g_{i_{y}}=g_{0}$. 
Let $h_{j'}$ be the ancestor of $h_{j}$ in $H$ such that $\ell_{H}(h_{j'}) = z-y$.
Let $h_{j}=h_{j_{0}}, h_{j_{1}}, \dots, h_{j'}=h_{j_{y}}$ be the path from $h_{j}$ to $h_{j'}$ in $H$. It has $y+1$ vertices. Clearly 
$P_{1} = \{[g_{i}, h_{j}] = [g_{i_{0}}, h_{j_{0}}]\}$, $[g_{i_{1}}, h_{j_{1}}], \dots, [g_{0}, h_{j'}]$ is a path in $G \boxtimes H$ whose edges are colored
$a_{z}, a_{z-1}, \dots, a_{z-y+1}$ in that order
(By \textit{Rule T$2$-R$1$}).
Note that if $y=z$ then $h_{j'} = h_{0}$ and $P_{1}$ is the required \textit{A-Rainbow-Path} from $u$ to $[g_{0}, h_{0}]$.
If $z<y$ then since $[g_{0}, h_{j'}] \in V(H_{0})$, by \textit{Case 1} there is a 
\textit{A-Rainbow-Path}, say $P_{2}$, from $[g_{0}, h_{j'}]$ to $[g_{0}, h_{0}]$. In particular
$P_{2}$ is a $\{a_{z-y}, a_{z-y-1}, \dots, a_{1}\}$ \textit{Rainbow Path}. Clearly $P = P_{1} \ldotp P_{2}$ is a
$\{a_{1}, a_{2}, \dots, a_{z}\}$\textit{-Rainbow-Path}
from vertex $u$ to $[g_{0}, h_{0}]$ with respect to coloring $f$. Hence $u$ is \textit{A-Reachable}.
\\
\\
\textit{\textbf{Case 3:}} \textbf{[When $u \in V_{y,0}$ where $2 \le y \le d(G)$, hence $u = [g_{i}, h_{0}] \in V(G_{0})$]}
\\
Let $u_{1} = [g_{i'}, h_{0}]$ be an ancestor of $u$ in $G_{0}$ such that $\ell_{G_{0}}(u_{1}) = 2$. By \textit{Rule T$1$-R$4$}
$G_{0}$ is edge colored using the \textit{Layer-wise Coloring} $f_{G_{0},A_{0}}$. The path
from vertex $u$ to $u_{1}$ in $G_{0}$, say $P_{1}$, 
is rainbow colored using colors from the set $\{a_{y}, a_{y-1}, \dots, a_{3}\}$.
Let $g_{i''}$ be the parent of $g_{i'}$ in $G$. Since $H$ is non-trivial $h_{1}$ exists and $(h_{0}, h_{1}) \in E(H)$. Therefore
$([g_{i'}, h_{0}], [g_{i''}, h_{1}]) \in E(G \boxtimes H)$ and is colored $a_{2}$ by \textit{Rule T$2$-R$2$}. 
Since $\ell_{G}(g_{i''}) = 1$, $(g_{i''}, g_{0}) \in E(G)$ and therefore $([g_{i''}, h_{1}], [g_{0}, h_{0}]) \in E(G \boxtimes H)$
and is colored $a_{1}$ by \textit{Rule T$2$-R$1$}. Hence the path
$P = P_{1}\ldotp ([g_{i'}, h_{0}],\ [g_{i''}, h_{1}],\ [g_{0}, h_{0}])$ is an \textit{A-Rainbow-Path} from vertex $u$ to $[g_{0}, h_{0}]$.
Hence $u$ is \textit{A-Reachable}.
\\
\\
\textit{\textbf{Case 4:}} \textbf{[When $u \in V_{y,z}$ where $y > z \ge 2$ and $z$ is \textit{even}]}
\\
Vertex $u = [g_{i}, h_{j}] \in V(G_{j})$. Let $u_{1} = [g_{i'}, h_{j}]$ be an ancestor of $u$ in $G_{j}$ such that
$\ell_{G_{j}}(u_{1}) = z$. 
Let $P_{1}$ be the path in $G_{j}$ from vertex $u$ to $u_{1}$.
Since $\ell_{H}(h_{j}) = z$ is \textit{even}, by \textit{Rule T$1$-R$4$},
$G_{j}$ is edge colored using the \textit{Layer-wise Coloring} $f_{G_{j},A_{z}}$.
The edges of $P_{1}$ are colored $a_{y}, a_{y-1}, \dots, a_{z+1}$ in that order. Since
$u_{1} = [g_{i'}, h_{j}] \in V_{z,z}$ and $z \ge 2$, by \textit{Case 2} we have a 
$\{a_{z}, a_{z-1}, \dots, a_{1}\}$\textit{-Rainbow-Path}, say $P_{2}$, from vertex $u_{1}$ to vertex $[g_{0}, h_{0}]$.
Clearly $P = P_{1}\ldotp P_{2}$ is an \textit{A-Rainbow-Path} from vertex $u$ to $[g_{0}, h_{0}]$. Hence vertex $u$ is
\textit{A-Reachable}.
\end{proof}
\begin{clm}
\label{rbr}
 If $u \in R_{B}$, then u is \textit{B-Reachable} with respect to the edge coloring $f$.
\end{clm}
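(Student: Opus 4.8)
The plan is to mirror the proof of Claim \ref{rar} almost verbatim, systematically replacing $A$-colors by $B$-colors and the target vertex $[g_0,h_0]$ by an (arbitrary) vertex of $V_{1,0}$. Accordingly I would fix $u=[g_i,h_j]\in V_{y,z}$ and split the argument into exactly the three strata that constitute $R_B$: the base stratum $V_{1,z}$ for $0\le z\le d(H)$, the on-or-above-diagonal stratum $V_{y,z}$ with $2\le y\le z$ and $|y-z|$ odd, and the below-diagonal stratum $V_{y,z}$ with $z<y$ and $z$ odd. In each stratum the goal is to exhibit an explicit $B$-Rainbow-Path from $u$ to some vertex of $V_{1,0}$ whose color indices, read from $u$ downward, decrease so that no color repeats.

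For the base stratum $u\in V_{1,z}$, the vertex lies in the copy $H_i$ with $\ell_G(g_i)=1$, which by Rule T1-R2 is colored by the Layer-wise coloring $f_{H_i,B}$; Observation \ref{RootPath} then yields at once a $\{b_z,\dots,b_1\}$-Rainbow-Path from $u$ down to $root(H_i)=[g_i,h_0]\in V_{1,0}$. For the on-or-above-diagonal stratum I would descend the Type-$2$ diagonal $V_{y,z}\to V_{y-1,z-1}\to\cdots\to V_{1,z-y+1}$; since the offset $z-y$ is odd and constant along this diagonal, Rule T2-R1 colors these edges $b_z,b_{z-1},\dots,b_{z-y+2}$, and landing in the base stratum $V_{1,z-y+1}$ lets me append the path of the previous case to complete a $\{b_z,\dots,b_1\}$-Rainbow-Path into $V_{1,0}$. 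These two cases are routine transcriptions of Cases $1$ and $2$ of Claim \ref{rar}.

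The one step that does not transcribe mechanically, and which I expect to be the main obstacle, is the below-diagonal stratum $z<y$ with $z$ odd. In Claim \ref{rar} the analogous Case $4$ simply climbs inside $G_j$ to the diagonal vertex $V_{z,z}$ and then invokes Case $2$, but here that reduction fails: $V_{z,z}$ has even offset $|y-z|=0$ and is therefore an $A$-type vertex, unreachable by an odd-offset ($B$) diagonal. Since $z$ is odd, Rule T1-R4 colors $G_j$ by $f_{G_j,B_z}$, whose bottom $z$ levels are all colored $d$; hence I can only climb $G_j$ with genuine $b$-colors down to level $z+1$, reaching $V_{z+1,z}$ along edges colored $b_y,b_{y-1},\dots,b_{z+2}$. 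From $V_{z+1,z}$ the offset is $1$ (odd), so the Type-$2$ diagonal $V_{z+1,z}\to V_{z,z-1}\to\cdots\to V_{1,0}$ is colored $b_z,b_{z-1},\dots,b_1$ by Rule T2-R1 and lands directly in $V_{1,0}$. The crux is precisely the bookkeeping that the climb uses $\{b_{z+2},\dots,b_y\}$ while the diagonal uses $\{b_1,\dots,b_z\}$, the index $z+1$ being simply skipped, so the concatenation is a genuine $B$-Rainbow-Path. I would also dispatch the degenerate checks (if $y=z+1$ the climb is empty; if $z=1$ the diagonal is the single $b_1$-edge $V_{2,1}\to V_{1,0}$) and confirm that none of these diagonal edges is the special $V_{1,1}$--$V_{2,0}$ edge of Rule T2-R2, so that Rule T2-R1 indeed governs the whole descent.
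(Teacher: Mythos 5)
Your proposal is correct and follows essentially the same route as the paper: Case 1 via $f_{H_i,B}$ and Observation \ref{RootPath}, Case 2 via the odd-offset Type-$2$ diagonal down to $V_{1,z-y+1}$, and the below-diagonal case by climbing $G_j$ (colored $f_{G_j,B_z}$) only to level $z+1$ and then riding the offset-$1$ diagonal $V_{z+1,z}\to\cdots\to V_{1,0}$ with colors $b_z,\dots,b_1$, exactly as the paper's Cases 3.a/3.b. Your explicit remark that one cannot climb all the way to $V_{z,z}$ because the offset there is even is a correct reading of why the paper stops the climb at $V_{z+1,z}$, and your color bookkeeping (climb uses $\{b_{z+2},\dots,b_y\}$, diagonal uses $\{b_1,\dots,b_z\}$) matches the paper's.
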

\begin{proof}
 Let $u = [g_{i}, h_{j}] \in V_{y,z}$. We consider the following \textit{$3$} cases.
\\
\\
\textit{\textbf{Case 1:}} \textbf{[When $u \in V_{1,z}$ for $0 \le z \le d(G)$]}
\\
Vertex $u \in V(H_{i})$ with $root(H_{i}) = [g_{i}, h_{0}]$. Since $\ell_{G}(g_{i}) = 1$,
$H_{i}$ is edge colored using the \textit{Layer-wise Coloring} $f_{H_{i},B}$ by \textit{Rule T$1$-R$2$}. 
From \textit{Observation }\ref{RootPath}
we infer that there is a $\{b_{1}, b_{2}, \dots, b_{z}\}$\textit{-Rainbow-Path} from vertex $u$ to $[g_{i}, h_{0}] \in V_{1,0}$
in $H_{i}$. If follows that $u$ is \textit{B-Reachable} with respect to the edge coloring $f$.
\\
\\
\textit{\textbf{Case 2:}} \textbf{[When $u \in V_{y,z}$ where $2 \le y \le z$ and $|y-z|$ is \textit{odd}]}
\\
Let $u = [g_{i}, h_{j}] \in V_{y,z}$. In $G$ let $g_{i'}$ be the ancestor of $g_{i}$ with $\ell_{G}(g_{i'}) = 1$. Since 
$\ell_{G}(g_{i}) = y$, the path in $G$ from $g_{i}$ to $g_{i'}$ in $G$ has $y$ vertices. 
Let $g_{i}=g_{i_{0}}, g_{i_{1}}, \dots, g_{i_{y-1}}=g_{i'}$
be that path. Similarly in $H$ let $h_{j'}$ be the ancestor of $h_{j}$ with $\ell_{H}(h_{j'}) = z-y+1$. Then the path
in $H$ from $h_{j}$ to $h_{j'}$ has $y$ vertices. Let $h_{j}=h_{j_{0}}, h_{j_{1}}, \dots, h_{j_{y-1}}=h_{j'}$ be that path. Clearly
$P_{1} = [g_{i}, h_{j}], [g_{i_{1}, h_{j_{1}}}], \dots, [g_{i'}, h_{j'}]$ is a path in $G \boxtimes H$ and its edges are colored
$b_{z}, b_{z-1}, \dots, b_{z-y+2}$ in that order (By \textit{Rule T$2$-R$1$}). Now $[g_{i'}, h_{j'}] \in V_{1, z-y+1}$
and by \textit{Case 1} there is a $\{b_{1}, b_{2}, \dots, b_{z-y+1}\}$\textit{-Rainbow-Path}, say $P_{2}$, from $[g_{i'}, h_{j'}]$
to $[g_{i'}, h_{0}] \in V_{1,0}$. Clearly $P = P_{1}\ldotp P_{2}$ is a \textit{B-Rainbow-Path} from $u$ to 
$[g_{i'}, h_{0}] \in V_{1,0}$. It follows that $u$ is \textit{B-Reachable} with respect to the edge coloring $f$.
\\
\\
\textit{\textbf{Case 3:}} \textbf{[When $u \in V_{y,z}$ where $y > z$ and $z$ is odd]}
\\
Let $u = [g_{i}, h_{j}] \in V_{y,z}$. We consider the following two sub-cases.
\\
\\
\textit{Case 3.a:} [When \textit{y = z + 1}]
\\
Since $\ell_{H}(h_{j}) = z$, the path from $h_{j}$ to $h_{0}$ in $H$ has $z+1$ vertices. Let this path be 
$h_{j}=h_{j_{0}}, h_{j_{1}}, \dots, h_{j_{z}}=h_{0}$.
Similarly let $g_{i'}$ be the ancestor of $g_{i}$ in $G$ such that $\ell_{G}(g_{i'}) = 1$. 
Since $\ell_{G}(g_{i}) = z+1$ the path from $g_{i}$ to $g_{i'}$ in $G$ has $z+1$ vertices. Let this path be 
$g_{i}=g_{i_{0}}, g_{i_{1}}, \dots, g_{i_{z}}=g_{i'}$. Clearly $u=[g_{i}, h_{j}], [g_{i_{1}}, h_{j_{1}}], \dots, [g_{i'}, h_{0}]$ is a path
in $G \boxtimes H$ and is colored $b_{z}, b_{z-1}, \dots, b_{1}$ in that order (By \textit{Rule T$2$-R$1$}). 
Since $[g_{i'}, h_{0}] \in V_{1,0}$ vertex $u$ is \textit{B-Reachable}.
\\
\\
\textit{Case 3.b:} [When \textit{y $>$ z + 1}]
\\
Vertex $u \in G_{j}$. Let $u_{1} = [g_{i''}, h_{j}]$ be an ancestor of $u$ in $G_{j}$ such that $\ell_{G_{j}}(u_{1}) = z+1$.
Since $z$ is odd, by \textit{Rule T$1$-R$4$} we know that $G_{i}$ is edge colored usiong the 
\textit{Layer-wise Coloring} $f_{G_{j}, B_{z}}$.
The edges of path, $P_{1} = P_{G_{j}}(u, u_{1})$ are colored $b_{y}, b_{y-1}, \dots, b_{z+2}$
in that order and is a rainbow path.
Since $u_{1} \in V_{z+1,z}$ by \textit{Case 3.a} there is a $\{b_{z}, b_{z-1}, \dots, b_{1}\}$\textit{-Rainbow-Path}, say $P_{2}$,
from vertex $u_{1}$ to some vertex, say $u_{2}$ in $V_{1,0}$. Clearly $P = P_{1}\ldotp P_{2}$ is a \textit{B-Rainbow-Path} from vertex $u$
to $u_{2} \in V_{1,0}$. It follows that $u$ is \textit{B-Reachable} with respect to the coloring $f$.
\end{proof}
\noindent
\begin{clm}
 \label{DLink}
Let $u \in V(G \boxtimes H) \setminus \{[g_{0}, h_{0}]\}$ then we have the following:
\\
\textbf{(a)} If $u \in R_{A} \setminus R_{B}$ then there exists $u_{1} \in R_{B}$ such that 
$(u, u_{1}) \in E(G \boxtimes H)$ and is colored $d$. 
\\
\textbf{(b)} If $u \in R_{B} \setminus R_{A}$ then there exists $u_{1} \in R_{A}$ such that 
$(u, u_{1}) \in E(G \boxtimes H)$ and is colored $d$. 
\end{clm}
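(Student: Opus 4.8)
The plan is to prove both parts by a single case analysis on the pair $(y,z)$ for which $u \in V_{y,z}$. The first step is pure bookkeeping: reading off the four-part descriptions of $R_A$ and $R_B$, I would determine for every cell $V_{y,z}$ which of the two sets it belongs to. This shows that the overlap is thin, namely $R_A \cap R_B = \{V_{1,z} : z \text{ odd}\}$, and that the pure-$A$ cells $R_A \setminus R_B$ are exactly $V_{0,z}$ $(z \ge 1)$, the cells with $2 \le y \le z$ and $y \equiv z \pmod 2$, and the cells with $z < y$, $y \ge 2$, $z$ even; dually the pure-$B$ cells $R_B \setminus R_A$ are $V_{1,z}$ ($z$ even), the cells with $2 \le y \le z$ and $y \not\equiv z \pmod 2$, and the cells with $z < y$, $y \ge 2$, $z$ odd. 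Part (a) is then the statement for pure-$A$ cells and part (b) for pure-$B$ cells, so it suffices to exhibit for each such cell one incident $d$-colored edge whose other endpoint lies in the complementary set.

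The second step is to catalogue the $d$-colored edges. By \textit{Rule T1-R3}, every edge inside an $H_i$ with $\ell_G(g_i) \ge 2$ is colored $d$; such an edge joins $V_{y,z}$ to $V_{y,z \pm 1}$ with $y \ge 2$ and so flips the parity of $z$. By \textit{Rule T1-R4} and the definitions of $A_w,B_w$, an edge inside a $G_j$ with $\ell_H(h_j)=w$ is colored $d$ exactly when its higher endpoint has $G$-level at most $w$; in particular no edge of $G_0$ (where $w=0$) is colored $d$. Finally, by \textit{Rule T2-R3} and \textit{Observation \ref{CrossEdges}}, every anti-parallel \textit{Type-2} edge (joining $V_{w,x}$ to $V_{w \pm 1, x \mp 1}$), except the distinguished edge between $V_{1,1}$ and $V_{2,0}$, is colored $d$. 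These three families are the only moves I would use.

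The heart of the argument is that, for $y \ge 2$, moving to the $H$-parent $V_{y,z}\to V_{y,z-1}$ is a $d$-edge and flips the parity of $z$, which is precisely the parameter separating $R_A$ from $R_B$ both when $y \le z$ (parity of $z-y$) and when $z < y$ (parity of $z$). This settles most interior cells: for a pure-$B$ cell with $2 \le y \le z$, or with $z<y$ and $z$ odd, the move $V_{y,z}\to V_{y,z-1}$ lands in $R_A$; and for the pure-$A$ cell with $z<y$, $z$ even and $z \ge 2$, the same move lands in $R_B$. For a pure-$A$ cell with $2 \le y \le z$ the $H$-parent move is unsafe on the diagonal $y=z$, so instead I would move to the $G$-parent $V_{y,z}\to V_{y-1,z}$, which is a $d$-edge because its $G$-level $y$ is at most $w=z$, and whose endpoint is $V_{1,z}\subseteq R_B$ (when $y=2$) or a cell with $y-1<z$ and odd $|(y-1)-z|$, hence again in $R_B$. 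In each case the verification is a short parity check against the descriptions of $R_A$ and $R_B$; note that only membership in the complementary set is asserted here, so neither \textit{Claim \ref{rar}} nor \textit{Claim \ref{rbr}} is needed at this stage.

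The delicate part, where the special colors and the guaranteed neighbours $g_1,h_1$ are used, is the low boundary $y\in\{0,1\}$ and $z=0$, where the $H$-parent move is unavailable or is not a $d$-edge. For $V_{0,z}$ with $z\ge1$ I would step inside $G_j$ from $g_0$ to $g_1$, an edge of $G$-level $1\le w=z$ and hence colored $d$, reaching $V_{1,z}\subseteq R_B$. For $V_{y,0}$ with $y\ge2$ there is no $H$-parent, so I would use the neighbour $h_1$ of $h_0$ to move inside $H_i$ to $V_{y,1}$ (colored $d$ by \textit{Rule T1-R3}), which lands in $R_B$. For the pure-$B$ cell $V_{1,z}$ with $z\ge2$ even I would move inside $G_j$ to the root $V_{0,z}\in R_A$, a $d$-edge since its level is $1\le z$. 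The genuinely awkward cell is $V_{1,0}$, which sits inside the $d$-free graph $G_0$: here the only usable $d$-edge is the anti-parallel \textit{Type-2} edge from $[g_i,h_0]$ to $[g_0,h_1]$, reaching $V_{0,1}\in R_A$; it exists because $(g_i,g_0)\in E(G)$ and $(h_0,h_1)\in E(H)$, and is colored $d$ by \textit{Rule T2-R3}. I expect these boundary cells to be the main obstacle: in each one must check both that the chosen tree neighbour exists and that the incident edge really receives $d$ rather than a color of $A$, $B$, or the special color $c$, whereas the interior cells reduce to routine parity checks.
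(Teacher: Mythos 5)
Your proposal is correct and follows essentially the same route as the paper: a case analysis over the cells $V_{y,z}$, exhibiting in each pure-$A$ or pure-$B$ cell a single incident $d$-colored edge (a $G_j$-parent move, an $H_i$-parent move, the step to $h_1$ from $V_{y,0}$, or the anti-parallel Type-2 edge out of $V_{1,0}$) landing in the complementary set. The only deviation is cosmetic — for the cells with $2\le y\le z$ the paper uses the $G_j$-parent move uniformly while you split between the $H$-parent and $G$-parent moves — and your boundary cases coincide with the paper's Cases 1, 2.a, 2.b and 4.
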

\begin{proof}
We consider the following cases.
\\
\\
\textit{\textbf{Case 1:}} \textbf{[When $u \in V_{0,z}$ where $0 \le z \le d(H)$, i.e $u \in V(H_{0})$ ]}
\\
In this case $u = [g_{0}, h_{j}] \in R_{A} \setminus R_{B}$.
We take $u_{1} = [g_{1}, h_{j}]$. Since $G$ is non-trivial, vertex $g_{1}$ exists and $(g_{0}, g_{1}) \in E(G)$.
Since $\ell_{G}(g_{i}) = 1$, we have
$u_{1} \in V_{1,z} \subseteq R_{B}$, where $1 \le z = \ell_{H}(h_{j}) \le d(H)$. Note that $z \ne 0$ since $u \ne [g_{0}, h_{0}]$.
Now the edge $(u, u_{1}) = ([g_{0}, h_{j}], [g_{1}, h_{j}]) \in E(G_{j})$. By \textit{Rule T$1$-R$4$},
$G_{j}$ is edge colored using the \textit{Layer-wise Coloring} $f_{G_{j}, A_{z}}$ or $f_{G_{j}, B_{z}}$, where $z = \ell_{H}(h_{j})$,
depending on whether $z$ is \textit{even} or \textit{odd}. 
Recalling that $A_{z} = \{a^{z}_{1}, a^{z}_{2}, \dots, a^{z}_{d(G)}\}$
and $B_{z} = \{b^{z}_{1}, b^{z}_{2}, \dots, b^{z}_{d(G)}\}$ the edge $(u, u_{1})$ is colored either $a^{z}_{1}$ or $b^{z}_{1}$.
Since $z \ge 1$, $a^{z}_{1} = b^{z}_{1} = d$ and hence the edge $(u, u_{1})$ is colored 
either $a^{z}_{1}=d$ or $b^{z}_{1}=d$.
\\
\\
\textit{\textbf{Case 2:}} \textbf{[When $u \in V_{1,z}$ where $0 \le z \le d(H)$]}
\\
In this case $u \in R_{B}$.
Note that if $z$ is odd then $V_{1,z} \subseteq R_{A} \cap R_{B}$. So we can assume that $z$ is even.
\\
\\
\textit{Case 2.a:} [When $u \in V_{1,0}$]
\\
Let $u = [g_{i}, h_{0}] \in V_{1,0}$ with $\ell_{G}(g_{i}) = 1$.
We take $u_{1} = [g_{0}, h_{1}]$. Since $H$ is non-trivial, $h_{1}$ exists and $(h_{0}, h_{1}) \in E(H)$.
Also edge $(g_{0}, g_{i}) \in E(G)$. Therefore the edge $(u, u_{1}) = ([g_{i}, h_{0}], [g_{0}, h_{1}]) \in E(G \boxtimes H)$.
It is easy to see that $u_{1} \in V_{0,1} \subseteq V(H_{0}) \subseteq R_{A}$. The edge $(u, u_{1})$ is colored
$d$ by \textit{Rule T$2$-R$3$}.
\\
\\
\textit{Case 2.b:} [When $u \in V_{1,z}$ where $2 \le z \le d(H)$ and $z$ is even]
\\
Let $u = [g_{i}, h_{j}] \in V_{1,z}$ with $\ell_{G}(g_{i}) = 1$. Then $(g_{0}, g_{1}) \in E(G)$.
We take $u_{1} = [g_{0}, h_{j}] \in V_{0,z} \subseteq R_{A}$, then $(u, u_{1}) = ([g_{i}, h_{j}], [g_{0}, h_{j}]) \in E(G_{j})$.
By \textit{Rule T$1$-R$4$}, $G_{j}$ is edge colored using the \textit{Layer-wise Coloring} $f_{G_{j}, A_{z}}$,
since $z = \ell_{H}(h_{j})$ is even. Since
$z \ge 2$, $a^{z}_{1} = b^{z}_{1} = d$ and the edge $(u , u_{1})$ is colored $d$.
\\
\\
\textit{\textbf{Case 3:}} \textbf{[When $u \in V_{y,z}$ where $2 \le y \le z$]}
\\
Let $u = [g_{i}, h_{j}] \in G_{j}$. Let $u_{1} = [g_{i'}, h_{j}]$ be the parent of $u$ in $G_{j}$. 
Since $\ell_{G}(g_{i'}) = \ell_{G}(g_{i}) - 1 = y-1$, $u_{1} \in V_{y-1,z}$. We claim that if 
$u \in V_{y,z} \subseteq R_{A} \setminus R_{B}$
then $u_{1} \in V_{y-1,z} \subseteq R_{B}$ 
\textbf{and} if $u \in V_{y,z} \subseteq R_{B} \setminus R_{A}$ then $u_{1} \in V_{y-1,z} \subseteq R_{A}$. 
To see this first note that
$\biguplus_{1 \le y \le z,\ |y-z|\ is\ even}V_{y,z} \subseteq R_{A}$ \textbf{and}
$\biguplus_{1 \le y \le z,\ |y-z|\ is\ odd}V_{y,z} \subseteq R_{B}$. Now the following is easy to see: if $2 \le y \le z$
and $V_{y,z} \subseteq R_{B} \setminus R_{A}\ (respectively\ R_{A} \setminus R_{B})$ 
then $1 \le y-1 < z$ and $V_{y-1,z} \subseteq R_{A}\ (respectively\ R_{B})$
since the parity of $|y-z|$ is different from the parity of $|(y-1)-z|$. 
By \textit{Rule T$1$-R$4$},
$G_{j}$ is edge colored using the \textit{Layer-wise Coloring} $f_{G_{j}, A_{z}}$ or $f_{G_{j}, B_{z}}$ 
depending on whether $z = \ell_{H}(h_{j})$ is \textit{even} or \textit{odd}. From the definition of the sets $A_{z}$
and $B_{z}$ we have that, for $1 \le i \le z$, $a^{z}_{i} = b^{z}_{i} = d$. Since $2 \le y \le z$, 
edge $(u, u_{1})$ is colored $a^{z}_{y} = d$ or $b^{z}_{y} = d$.
\\
\\
\textit{\textbf{Case 4:}} \textbf{[When $u \in V_{y,0}$ where $2 \le y \le d(G)$]}
\\
In this case $u \in R_{A} \setminus R_{B}$. Let $u = [g_{i}, h_{0}] \in V(H_{i})$. Let $u_{1} = [g_{i}, h_{1}] \in V(H_{i})$.
Since $(h_{0}, h_{1}) \in E(H)$, $(u, u_{1}) = ([g_{i}, h_{0}], [g_{i}, h_{1}]) \in E(H_{i})$. 
Vertex $u_{1} \in V_{y,1} \subseteq R_{B}$ as $(z=1) < 2 \le y$ and $1$ is \textit{odd}.
Since
$\ell_{G}(g_{i}) = y \ge 2$, by \textit{Rule T$1$-R$3$} all the edges of $H_{i}$ are colored $d$. Hence $(u, u_{1})$
is colored $d$.
\\
\\
\textit{\textbf{Case 5:}} \textbf{[When $u \in V_{y,z}$ where $1 \le z < y$]}
\\
Let $u = [g_{i}, h_{j}] \in V(H_{i})$. Let $u_{1} = [g_{i}, h_{j'}]$ be the parent of $u$ in $H_{i}$. Then
$(u, u_{1}) = ([g_{i}, h_{j}], [g_{i}, h_{j'}]) \in E(H_{i})$ and
$\ell_{H}(h_{j'}) = \ell_{H}(h_{j})-1 = z-1 \ge 0$. Since $y > z-1$ if 
$u \in V_{y,z} \subseteq R_{A} \setminus R_{B}\ (respectively\ R_{B} \setminus R_{A})$ then $z-1$ is \textit{odd} (\textit{even})
and $u \in V_{y,z-1} \subseteq R_{B}\ (respectively\ R_{A})$. Also since $y \ge 2$ by \textit{Rule T$1$-R$3$}, all the edges of $H_{i}$
are colored $d$.
\end{proof}
\begin{lem}
 \label{StrongfClaim}
The edge coloring $f$ is a rainbow coloring of $G \boxtimes H$.
\end{lem}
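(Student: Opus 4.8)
The plan is to combine Claims \ref{rar}, \ref{rbr}, and \ref{DLink} with one structural fact: the color classes $A$, $B$, and $\{c\}$ are pairwise disjoint, and the single edge joining the global root $[g_{0},h_{0}]$ to any vertex of $V_{1,0}$ is colored $c$. Indeed, for $w=[g_{i},h_{0}]\in V_{1,0}$ the edge $([g_{0},h_{0}],w)$ lies in $G_{0}$, which is colored by the Layer-wise Coloring $f_{G_{0},A_{0}}$ (Rule T1-R4, since $\ell_{H}(h_{0})=0$ is even); as $w$ sits at level $1$ of $G_{0}$, this edge receives $a^{0}_{1}=c$. First I would record the central gadget: \emph{for every $a\in R_{A}$ and every $b\in R_{B}$ there is a rainbow walk from $a$ to $b$ whose edge colors lie in $A\uplus\{c\}\uplus B$.} To build it, take the $A$-Rainbow-Path from $a$ to $[g_{0},h_{0}]$ granted by Claim \ref{rar}, follow the $c$-colored edge from $[g_{0},h_{0}]$ to the vertex $w\in V_{1,0}$ that is the endpoint of the $B$-Rainbow-Path from $b$ granted by Claim \ref{rbr}, and append that $B$-path in reverse. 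The $A$-part uses only colors of $A$, the $B$-part only colors of $B$, and $c$ appears exactly once; since these sets are disjoint the concatenation is a rainbow walk, and a rainbow walk always contains a rainbow path between its endpoints.

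With the gadget in hand I would prove the lemma by a short case analysis on two distinct vertices $u,v$, using $R_{A}\cup R_{B}=V(G\boxtimes H)$. If one of $u,v$ lies in $R_{A}$ and the other in $R_{B}$, the gadget already yields the required rainbow walk. The only remaining situation is that both vertices are forced into the \emph{same} class, i.e. $u,v\in R_{A}\setminus R_{B}$ or $u,v\in R_{B}\setminus R_{A}$, and here I would use Claim \ref{DLink} to cross over. Say $u,v\in R_{A}\setminus R_{B}$; since $u\ne v$, at least one of them, say $u$, differs from $[g_{0},h_{0}]$ (note $[g_{0},h_{0}]\in R_{A}\setminus R_{B}$, which is exactly why that claim is stated for $u\ne[g_{0},h_{0}]$). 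By Claim \ref{DLink}(a) there is $u_{1}\in R_{B}$ with the edge $(u,u_{1})$ colored $d$. Applying the gadget to $v\in R_{A}$ and $u_{1}\in R_{B}$ gives a rainbow walk from $v$ to $u_{1}$ with colors in $A\uplus\{c\}\uplus B$, which I extend by the $d$-colored edge $(u_{1},u)$. Since $d\notin A\uplus\{c\}\uplus B$ and is used once, the result is a rainbow walk from $v$ to $u$. The case $u,v\in R_{B}\setminus R_{A}$ is symmetric via Claim \ref{DLink}(b), and here both vertices automatically differ from $[g_{0},h_{0}]$ because $[g_{0},h_{0}]\notin R_{B}$.

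I expect the delicate points to be bookkeeping rather than new ideas. The main thing to get right is the disjointness of color classes across the three (or four) concatenated pieces: in particular that the $A$- and $B$-Rainbow-Paths of Claims \ref{rar} and \ref{rbr} genuinely avoid $c$ and $d$ (they do, being colored only from $A$ or only from $B$), so that inserting $c$ once and $d$ at most once keeps the walk rainbow. The second point is to confirm that the $d$-edge supplied by Claim \ref{DLink} always lands in the \emph{opposite} reachability class, which is precisely what parts (a) and (b) assert; this is what allows the same gadget to finish every same-class case. Everything else is the routine check that each concatenation is a legitimate walk, together with the standard fact that a rainbow walk contains a rainbow path.
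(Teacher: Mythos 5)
Your proposal is correct and follows essentially the same route as the paper: the ``gadget'' you describe is exactly the paper's Case 1 (an $A$-Rainbow-Path to $[g_{0},h_{0}]$, the $c$-coloured edge into $V_{1,0}$, then a reversed $B$-Rainbow-Path), and the same-class cases are resolved via Claim \ref{DLink} with a single $d$-edge, just as in the paper. The only difference is organisational: the paper disposes of the vertex $[g_{0},h_{0}]$ in a separate preliminary step, whereas you absorb it into the general cases by observing that $[g_{0},h_{0}]\in R_{A}\setminus R_{B}$ and that at least one of the two vertices is not the root.
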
 
\begin{proof} 
 We show that any distinct pair of vertices,
$u$ and $v$ from $G \boxtimes H$ have a rainbow path between them with respect to the edge coloring $f$.
Since $V(G \boxtimes H) = R_{A} \cup R_{B}$, vertex $u \in R_{A}$ or $u \in R_{B}$. The same applies to vertex $v$. 
Let $u = [g_{0}, h_{0}]$.
If $v \in R_{A}$ then by \textit{Claim }\ref{rar} there is an \textit{A-Rainbow-Path}
from $v$ to $u = [g_{0}, h_{0}]$. If $v \in R_{B}$ then by \textit{Claim }\ref{rbr} there is a \textit{B-Rainbow-Path}
from $v$ to some vertex $v' \in V_{1,0}$. We know that $(v',[g_{0}, h_{0}]) \in E(G_{0})$ and is colored $c$ by
the \textit{Layer-wise Coloring} $f_{G_{0},A_{0}}$. Hence there is a $(\{c\} \uplus B)$\textit{-Rainbow-Path}
from vertex $v$ to $u = [g_{0}, h_{0}]$.
\\
\indent
We may now assume that $u, v \ne [g_{0}, h_{0}]$. We have the following two cases:
\\
\\
\textit{\textbf{Case 1:}} \textbf{[When one of the vertices is in $R_{A}$ and the other is in $R_{B}$]}
\\
Without loss of generality let $u \in R_{A}$ and $v \in R_{B}$. By \textit{Claim }\ref{rar} there is an \textit{A-Rainbow-Path}
between vertex $u$ and vertex $[g_{0}, h_{0}]$, let this path be $P_{1}$. 
Similarly by \textit{Claim }\ref{rbr} there is a \textit{B-Rainbow-Path}
between vertex $v$ and some vertex $v_{1} = [g_{i}, h_{0}] \in V_{1,0}$, let this path be $P_{2}$. Now $v_{1} \in V(G_{0})$
and $\ell_{G}(g_{i}) = 1$, hence $(g_{0}, g_{1}) \in E(G)$ and $(v_{1}, [g_{0}, h_{0}]) \in E(G_{0})$. By \textit{Rule T$1$-R$4$} $G_{0}$
is edge colored using the \textit{Layer-wise Coloring} $f_{G_{0},A_{0}}$. The edge $(v_{1}, [g_{0}, h_{0}])$
is colored $a^{0}_{1} = c$. Clearly the path $P = P_{1}\ldotp ([g_{0}, h_{0}], v_{1})\ldotp P_{2}$ is a 
$(A \uplus B \uplus \{c\})$\textit{-Rainbow-Path}
between vertices $u$ and $v$.
\\
\\
\textit{\textbf{Case 2:}} \textbf{[When both the vertices are in $R_{A} \setminus R_{B}$]}
\\
By \textit{Claim }\ref{DLink} there exists a vertex $u_{1} \in R_{B} \subset V(G \boxtimes H)$ such that $(u, u_{1}) \in E(G \boxtimes H)$ and
is colored $d$.
Since $v \in R_{A}$ and $u_{1} \in R_{B}$ by \textit{Case 1} there is a $(A \uplus B \uplus \{c\})$\textit{-Rainbow-Path} 
from vertex $v$ to $u_{1}$, say $P_{1}$.
Clearly $P = P_{1} \ldotp (u_{1}, u)$ is a rainbow path from vertex $v$ to vertex $u$.
\\
\\
\textit{\textbf{Case 3:}} \textbf{[When both the vertices are in $R_{B} \setminus R_{A}$]}
\\
By \textit{Claim }\ref{DLink} there exists a vertex $u_{2} \in R_{A} \subset V(G \boxtimes H)$ such that $(u, u_{2}) \in E(G \boxtimes H)$ and
is colored $d$. Now using arguments similar to \textit{Case 2} we can prove that there exists a rainbow path between vertices
$u$ and $v$.
\end{proof}
\begin{thm}
\label{StrongTheorem}
 $r(G' \boxtimes H') \le rc(G' \boxtimes H') \le 2r(G' \boxtimes H') + 2$
\end{thm}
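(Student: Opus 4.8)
The plan is to assemble the final theorem almost entirely from machinery already built up in this section. The coloring $f$ on $E(G \boxtimes H)$ uses the color sets $A$, $B$ and the two extra colors $c, d$, so the total number of colors is $|A| + |B| + 2 = d(G) + d(G) + 2 = 2d(G) + 2$. Since we assumed $r(G') \ge r(H')$ and established that $d(G) = r(G')$ and $r(G' \boxtimes H') = r_{max} = r(G')$, this count is exactly $2r(G' \boxtimes H') + 2$. Thus the first step is simply to invoke \emph{Lemma }\ref{StrongfClaim}, which certifies that $f$ is a rainbow coloring of $G \boxtimes H$, and conclude $rc(G \boxtimes H) \le 2r(G' \boxtimes H') + 2$.

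The second step bridges from the spanning subgraph back to the full product. Earlier in the section it was noted that $G \boxtimes H$ is a spanning subgraph of $G' \boxtimes H'$ (since $G$ and $H$ are spanning BFS-trees of $G'$ and $H'$), and that a rainbow coloring of a connected spanning subgraph yields a rainbow coloring of the host graph, giving $rc(G' \boxtimes H') \le rc(G \boxtimes H)$. Chaining this with the previous inequality gives the upper bound $rc(G' \boxtimes H') \le 2r(G' \boxtimes H') + 2$. For the lower bound, I would use the general fact stated in the introduction that the diameter (and hence the radius) is always a lower bound for the rainbow connection number, so $r(G' \boxtimes H') \le rc(G' \boxtimes H')$. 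Together these two bounds are exactly the statement of the theorem.

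Since all the substantive work lives in the supporting claims, the only honest ``obstacle'' here is bookkeeping: one must confirm that the color count is $2r(G' \boxtimes H') + 2$ rather than off by the additive constants, and that the reduction to $G \boxtimes H$ is legitimate. The genuine difficulty was front-loaded into \emph{Lemma }\ref{StrongfClaim} and its feeder claims (\emph{Claim }\ref{rar}, \emph{Claim }\ref{rbr}, and especially \emph{Claim }\ref{DLink}), which handle the delicate parity arguments partitioning $V(G \boxtimes H)$ into $R_A$ and $R_B$ and the role of the color $d$ in linking the $R_A \setminus R_B$ and $R_B \setminus R_A$ vertices. Given those results, the theorem proof itself is a three-line assembly. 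The anticipated write-up is:

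\begin{proof}
The edge coloring $f$ uses $|A| + |B| + |\{c, d\}| = 2d(G) + 2 = 2r(G') + 2$ colors. Since we have assumed $r(G') \ge r(H')$, we have $r(G' \boxtimes H') = r_{max} = r(G')$, and hence $f$ uses $2r(G' \boxtimes H') + 2$ colors. By \emph{Lemma }\ref{StrongfClaim}, $f$ is a rainbow coloring of $G \boxtimes H$, so $rc(G \boxtimes H) \le 2r(G' \boxtimes H') + 2$. Since $G \boxtimes H$ is a connected spanning subgraph of $G' \boxtimes H'$, any rainbow coloring of $G \boxtimes H$ extends to one of $G' \boxtimes H'$, whence $rc(G' \boxtimes H') \le rc(G \boxtimes H) \le 2r(G' \boxtimes H') + 2$. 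The lower bound $r(G' \boxtimes H') \le rc(G' \boxtimes H')$ follows since the radius is a lower bound for the rainbow connection number of any connected graph.
\end{proof}
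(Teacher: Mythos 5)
Your proposal matches the paper's proof of this theorem essentially verbatim: count the $2d(G)+2$ colors used by $f$, identify $d(G)=r(G')=r(G'\boxtimes H')$, invoke Lemma \ref{StrongfClaim} together with the spanning-subgraph reduction $rc(G'\boxtimes H')\le rc(G\boxtimes H)$, and note the trivial radius lower bound. It is correct and takes the same route; your write-up is in fact slightly more explicit than the paper's about the spanning-subgraph step.
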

\begin{proof}
 The rainbow coloring $f$ uses $|A| + |B| + |\{c, d\}| = 2d(G) + 2 = 2r(G \boxtimes H) + 2$ colors.
Since $d(G) = r(G') = r(G' \boxtimes H')$
From
of \textit{Claim} \ref{StrongfClaim} the upper bound follows. The lower bound is trivial.
\end{proof}

\noindent
\textbf{Tight Example:}
\\
Consider two graphs $G_{1}$ and $G_{2}$ such that $diam(G_{1})$ = $2r(G_{1}) \ge diam(G_2)$. For example $G_{1}$ may be taken as a path with odd number of vertices. Then $rc(G_{1} \boxtimes G_{2}) \ge diam(G_{1} \boxtimes G_{2}) = 2r(G_{1} \boxtimes G_2)$.

\bibliographystyle{plain}
\bibliography{rainbow}

\end{document}